\numberwithin{equation}{section}
\newtheorem{theorem}{Theorem}[section]
\newtheorem{proposition}[theorem]{Proposition}
\newtheorem{corollary}[theorem]{Corollary}
\theoremstyle{definition}
\newtheorem{definition}[theorem]{Definition}
\newtheorem{def-prop}[theorem]{Definition-Proposition}
\newtheorem{remark}[theorem]{Remark}
\newtheorem{example}[theorem]{Example}
\def\FF{\mathbb{F}}
\def\NN{\mathbb{N}}
\def\QQ{\mathbb{Q}}
\def\RR{\mathbb{R}}
\def\ZZ{\mathbb{Z}}
\def\A{{\mathcal A}}
\def\E{{\mathcal E}}
\def\P{{\mathcal P}}
\def\a{{\bf a}}
\def\1{{\bf 1}}
\def\0{{\bf 0}}
\begin{document}

%%%%%%%%%%%%%%%%%%%%%%%%%%%%%%%%%%%%%%%%%%%%%%%%%%%%%%%%%%%%%%%%%%%%%%%%%%%%%%%%%%%

\title{Normal 0-1 polytopes}

\author{Huy T\`ai H\`a}
\address{Tulane University \\ Department of Mathematics \\
6823 St. Charles Ave. \\ New Orleans, LA 70118, USA}
\email{tha@tulane.edu}
\urladdr{http://www.math.tulane.edu/$\sim$tai/}

\author{Kuei-Nuan Lin}
\address{Penn State Greater Allegheny\\ Academic Affairs \\
4000 University Dr. \\ McKeesport, PA 15132, USA}
\email{kul20@psu.edu}
\urladdr{http://www.personal.psu.edu/$\sim$kul20/}

\keywords{Ehrhart ring, polytopal ring, toric ring, normal, squarefree monomial ideals, hypergraphs, polytopes.}
\subjclass[2000]{13B20, 13A30, 52B20, 05C31}
\thanks{H\`a is partially supported by a grant \#279786 from the Simons Foundation.}
%\thanks{Corresponding author's email: tha@tulane.edu, telephone: +1-504-862-3429, fax: +1-504-865-5063.}

\begin{abstract}
We study the question of when 0-1 polytopes are normal or, equivalently, having the integer decomposition property. In particular, we shall associate to each 0-1 polytope a labeled hypergraph, and examine the equality between its Ehrhart and polytopal rings via the combinatorial structures of the labeled hypergraph.
\end{abstract}

\maketitle

%%%%%%%%%%%%%%%%%%%%%%%%%%%%%%%%%%%%%%%%%%%%%%%%%%%%%%%%%%%%%%%%%%%%%%%%%%%%%%

\section{Introduction} \label{sec.intro}

A 0-1 polytope in $\RR^n$ is the convex hull of a finite set of (0,1)-vectors, that is, the convex hull of a subset of the vertices of the regular cube $\{0,1\}^n$. These polytopes, though simply defined, are important in combinatorial optimization. Special classes of 0-1 polytopes have been much studied, for instance, the traveling salesman polytopes (cf. \cite{ABCC, GrPa}) and cut polytopes (cf. \cite{BM, DL}).

An integral convex polytope $\P \subseteq \RR^n$ is said to be \emph{normal} if
$$(j\P \cap \ZZ^n) + (l\P \cap \ZZ^n) = (j+l)\P \cap \ZZ^n \ \forall \ j, l \in \NN.$$
Such polytopes are also referred to as having the \emph{integer decomposition property} (cf. \cite{CHHH}).
In this paper, we investigate the question of which 0-1 polytopes are normal. Equivalently, we look at graded algebras associated to a 0-1 polytope $\P$, namely the \emph{Ehrhart ring} $\A[\P]$ and \emph{polytopal ring} $k[\P]$, and study the question of when these two rings are the same. Roughly speaking, as $k$-vector spaces, the Ehrhart ring $\A[\P]$ has a basis consisting of monomials whose exponents are in $\{ (\a,t) \in (t\P \cap \ZZ^n) \times \{t\} \subseteq \ZZ^n \times \NN\}$ while the polytopal ring $k[\P]$ has a basis consisting of monomials whose exponents are nonnegative integral combinations of points in $(\P \cap \ZZ^n) \times \{1\} \subseteq \ZZ^n \times \NN$.

In general, $k[\P]$ is a subalgebra of $\A[\P]$, and $\A[\P]$ is normal and integral over $k[\P]$. Thus, as a consequence of the normality of $\P$, the polytopal ring $k[\P]$ would be normal. The polytopal ring $k[\P]$ is a subalgebra generated by monomials in a polynomial ring. Subalgebras generated by squarefree monomials associated to graphs and hypergraphs (also referred to as \emph{toric} rings) have attracted much attention in recent years. They have been investigated from various angles with various applications; for instance, in algebraic statistics (cf. \cite{GP, PS}), and in coding theory (cf. \cite{RSV,SVPV}), in characterizing when symbolic and ordinary powers of squarefree monomial ideals are equal (cf. \cite{HHTZ}), in investigating algebraic properties of graded algebras (cf. \cite{OK}), and in connection to graph and hypergraph theory (cf. \cite{HO, SVV0, SVV}). The normality of these toric rings is not only an important algebraic property but also closely connected to a long-standing conjecture in combinatorial optimization, the Conforti-Cornu\'ejols conjecture, which states the equivalence between the \emph{packing} and \emph{max-flow-min-cut} properties of a dual integer linear programming system (cf. \cite{DRV, DV, FHM, HM, GVV, HHTZ, MOV}). See particularly \cite[Conjectures 3.14 and 3.15]{DV} for an example of how the equality $\A[\P] = k[\P]$ is related to the Conforti-Cornu\'ejols conjecture. See also \cite[Corollary 1.6]{HHTZ} for a precise algebraic interpretation of the Conforti-Cornu\'ejols conjecture.

Our method is to identify the vertices of a 0-1 polytope $\P$ with squarefree monomials (which we view as the generators for an ideal $I$), and then to make use of the correspondence ($I \mapsto H_I$ and $I_H \mapsfrom H$) between squarefree monomial ideals and \emph{labeled hypergraphs} that was recently introduced by the second author and McCullough \cite{LM}. We shall examine combinatorial structures in a labeled hypergraph $H$ that guarantee/obstruct the normality of $\P$. Our work exhibits a nice interplay between discrete mathematics, combinatorial optimization and commutative algebra.

Simple hypergraphs are also known as \emph{Sperner systems} or \emph{clutters}. Clutters for which corresponding \emph{edge polytopes} are normal are called \emph{Ehrhart clutters}. Conditions for a clutter to be Ehrhart were discussed and combinatorial aspects of Ehrhart clutters were investigated in \cite{MOV}. Since, as we shall note later on, labeled hypergraphs are the \emph{duals} of resulting clutters, our study can be seen as addressing the dual problems to those in \cite{MOV}.

The starting point of our work is a simple observation, Proposition \ref{thm.nooddcycle}, in which it is shown that if $H = H_I$ is a \emph{balanced} labeled hypergraph and the generators of $I$ are of the same degree then the corresponding polytope $\P$ is normal. Our focus is thus on labeled hypergraphs that are not balanced, i.e., hypergraphs that do contain \emph{special} odd cycles.

Our first main result, Theorem \ref{thm.oneoddcycle}, gives a necessary and sufficient condition for the normality of a 0-1 polytope $\P$ in the case where the 1-skeleton of the corresponding labeled hypergraph $H$ is connected and contains odd cycles. Moving beyond this case, our next main result, Theorem \ref{thm.bi-color}, gives a necessary condition for the normality of $\P$ in the case where the 1-skeleton of $H$ is connected but does not necessarily contain odd cycles. In fact, we shall give a sufficient condition for $\A[\P] \not= k[\P]$ when the 1-skeleton of $H$ is connected but does not necessarily contain odd cycles.

The threshold of our work in the case where the 1-skeleton of the labeled hypergraph is not necessarily connected is Theorem \ref{thm.subhypergraph}, in which it is proved that if $H'$ is a \emph{minor} of $H$ (corresponding to polytopes $\P'$ and $\P$, respectively) and $\P'$ is not normal then neither is $\P$. Theorem \ref{thm.subhypergraph} allows us to restrict our attention to ``minimal'' hypergraphs whose corresponding polytopes are not normal. Inspired by the \emph{bow-tie} graphs introduced in \cite{SVV} and the \emph{odd cycles condition} discussed in \cite{HO}, we introduce the notion of an \emph{exceptional} pair of odd cycles. Our last main result, Theorem \ref{thm.exceptional}, shows that if $H$ contains an exceptional pair of odd cycles then the corresponding polytope $\P$ is not normal.

The paper is outlined as follows. In the next section, we collect notations and terminology used in the paper. In particular, we recall the definition of labeled hypergraphs and the correspondence between labeled hypergraphs and squarefree monomial ideals. In Section \ref{sec.red}, we recall a combinatorial criterion for the normality 0-1 polytopes, and provide a number of reduction results that allow us to restrict our attention to subhypergraphs. We prove Theorem \ref{thm.subhypergraph} in this section. The last section of the paper is devoted to our main results. Theorems \ref{thm.oneoddcycle}, \ref{thm.bi-color} and \ref{thm.exceptional} are proved in this section.

\noindent{\bf Acknowledgement.} The authors would like to thank anonymous referees for many useful suggestions/comments in improving the presentation of our paper. Many of our examples are computed using Macaulay 2 \cite{M2}.

%%%%%%%%%%%%%%%%%%%%%%%%%%%%%%%%%%%%%%%%%%%%%%%%%%%%%%%%%%%%%%%%%%%%%%%%%%%%%%

\section{Preliminaries} \label{sec.prel}

In this section, we collect basic notation and terminology used in the paper. We follow standard texts in the area \cite{BH, CLS, E, S, SH}.

Throughout the paper, $\NN$ will denote the set of nonnegative integers (including 0). Let $k$ be a field, and let $R = k[x_1, \dots, x_n]$ and $S = k[x_1, \dots, x_n,y]$ be polynomial rings over $k$. We shall first recall the notion of a normal polytope.

\begin{definition} \label{def.NP}
An integral convex polytope $\P \subseteq \RR_{\ge 0}^n$ is called \emph{normal} if
$$(j\P \cap \ZZ^n) + (l\P \cap \ZZ^n) = (j+l)\P \cap \ZZ^n \ \forall \ j, l \in \NN.$$
\end{definition}

Associated to an integral convex polytope are naturally graded algebras, the Ehrhart and polytopal rings.

\begin{definition} \label{def.Ehrhart}
Let $\P \subseteq \RR_{\ge 0}^n$ be an integral convex polytope.
\begin{enumerate}
\item The \emph{Ehrhart ring} of $\P$, denoted by $\A[\P]$, is defined to be the subalgebra $\A[\P] = \bigoplus_{t \ge 0} \A[\P]_t$ in $S$, where $\A[\P]_t$ is the $k$-vector space spanned by the monomials $\{ x^\a y^t ~|~ \a \in t\P \cap \ZZ^n\}$ (here $t\P = \{t\a ~|~ \a \in \P\}$).
\item The \emph{polytopal ring} of $\P$, denoted by $k[\P]$, is defined to be the subalgebra $k[x^\a y ~|~ \a \in \P \cap \ZZ^n]$ in $S$.
\end{enumerate}
\end{definition}

\begin{remark} \label{rmk.NP}
It can be seen that $\P$ is a normal polytope if and only if $\A[\P] = k[\P]$. In fact, most of our results are stated in terms of this equality. In general, we have $k[\P] \subseteq \A[\P]$, and $\A[\P]$ is normal and integral over $k[\P]$ (cf. \cite{BH}).
\end{remark}

\begin{remark}
Our definition of normal polytopes is that of \cite{CLS}. This is not to be confused with the normality of the polytopal ring $k[\P]$, as used by some other authors, for instance, \cite{BGT}. We shall see later, in Remark \ref{rmk.referee}, that the normality of $k[\P]$ does not necessarily imply the normality of $\P$.
\end{remark}

As pointed out in \cite{MOV}, the equality $\A[\P] = k[\P]$ occurs if and only if $\{(\a_1, 1), \dots, (\a_s,1)\}$, where $\{\a_1, \dots, \a_s\}$ are the vertices of $\P$, is a \emph{Hilbert basis}. Thus, this property can be verified using computational packages, such as Normaliz \cite{BI}. The motivation of our work in this paper, on the other hand, is to identify new families of and to find new necessary and/or sufficient conditions for normal 0-1 polytopes.

From now on, let $\P$ be a 0-1 polytope. Then $\P$ contains no integral points except at its vertices, and so
$k[\P] = k[x^\a y ~|~ \a \text{ is a vertex of } \P].$
In this case, we can identify the vertices of $\P$ with generators of a squarefree monomial ideal
$$I_\P = \langle x^\a ~|~ \a \text{ is a vertex of } \P \rangle \subseteq k[x_1, \dots, x_n].$$
This squarefree monomial ideal will correspond to another combinatorial object, namely, a labeled hypergraph.
%In fact, this identification gives a correspondence between 0-1 polytopes in $\RR^n$ and squarefree monomial ideals in $k[x_1, \dots, x_n]$.

We shall now recall the notion of labeled hypergraphs introduced in \cite{LM}.

\begin{definition} \label{def.LH}
Let $\A$ be an alphabet, let $V$ be a finite set, and let $\P(V)$ denote its power set. A \emph{labeled hypergraph} $H$ on $V$ with alphabet $\A$ is consists of a function $f: \A \rightarrow \P(V)$. Elements of $V$ are the \emph{vertices}, elements of $X = \{a \in \A ~|~ f(a) \not= \emptyset\}$ are the \emph{labels}, and elements of $\E = \text{Im} f$ are the \emph{edges} in $H$. We often write $H = (V,f)$; the sets $\A$, $X$ and $\E$ are understood from the definition of $f$.
\end{definition}

For $E \in \E$, elements $a \in \A$ for which $f(a) = E$ are called \emph{labels} of $E$. For edges $E,F \in \E$, we say that $E$ is a \emph{subedge} of $F$ if $E \subseteq F$. An edge $E$ in $H$ is called \emph{simple} if it does not contain any proper subedges.
A vertex $v \in V$ is called a \emph{closed} vertex if $\{v\} \in \E$; otherwise $v$ is said to be an \emph{open} vertex.

\begin{definition} \label{def.subhypergraph}
Let $H = (V,f)$ be a labeled hypergraph and let $W \subseteq V$ be a subset of the vertices. The \emph{induced subhypergraph} of $H$ on $W$ is the labeled hypergraph $H|_W = (W, g)$, where $g$ is obtained by composing $f$ with the restriction map $\P(V) \rightarrow \P(W)$. For an edge $E$ in $H$, we call $E \cap W$ the \emph{contraction} of $E$ on $W$.
\end{definition}

\begin{definition} \label{def.minor}
Let $H = (V,f)$ be a labeled hypergraph.
\begin{enumerate}
\item Let $E$ be an edge in $H$. The \emph{deletion} $H \setminus E$ of $E$ from $H$ is obtained by removing $E$ and all vertices belonging to $E$ from $H$, i.e., $H \setminus E = H|_{V \setminus E}$.
\item A hypergraph obtained from $H$ by a sequence of deletions of edges is called a \emph{minor} of $H$.
\end{enumerate}
\end{definition}

It has been shown (cf. \cite{HO, SVV}) that the cycle structure of a graph has a strong connection to many algebraic properties of the toric ring of its edge ideal. We shall see that this is also the case for labeled hypergraphs.

\begin{definition} \label{def.cycle}
Let $H$ be a labeled hypergraph.
\begin{enumerate}
\item A \emph{cycle} in a labeled hypergraph $H$ is an alternating sequence of distinct vertices and edges in $H$, namely $v_1, E_1, v_2, E_2, \dots, v_m, E_m, v_{m+1} = v_1$, so that $v_i, v_{i+1} \in E_i$ for all $i$. We call $v_1, \dots, v_m$ (and only those) \emph{vertices}, and call $E_1, \dots, E_m$ \emph{edges} of the cycle.
\item A cycle in $H$ is called \emph{special} if it has no edge that contains more than 2 vertices in the cycle.
\end{enumerate}
\end{definition}

The \emph{1-skeleton} of a labeled hypergraph $H$ is a graph in the classical sense whose vertices are vertices in $H$ and whose edges are 1-dimensional edges in $H$. An \emph{$l$-coloring} of a graph $G$ is an assignment of $l$ colors to the vertices in $G$ such that adjacent vertices receive different colors.

Associated to any squarefree monomial ideal, one can construct a labeled hypergraph.

\begin{definition} \label{def.LHideals}
Let $I \subseteq R = k[x_1, \dots, x_n]$ be a squarefree monomial ideals with minimal generators $\{F_1, \dots, F_m\}$. The \emph{labeled hypergraph} $H_I$ associated to $I$ is defined by taking $\A = \{x_1, \dots, x_n\}$, $V = \{1, \dots, m\}$ and $f: \A \rightarrow \P(V)$ to be the function $f(x) = \{j ~|~ x \text{ divides } F_j\}.$
\end{definition}
%Note that in the definition of $H_I$, $X$ consists of the variables appearing in generators of $I$.

\begin{example} Let $R = k[a,b,\dots, y,z]$ and let $I = (afh, aefgij, bchij, dghij)$. The labeled hypergraph $H_I$ of $I$ is shown in Figure \ref{fig.LH}. It consists of 4 vertices and 4 labeled edges. In $H_I$, $1$ is an open vertex, and $2,3$ and $4$ are closed vertices (we often indicate a close vertex by a filled circle and an open vertex by an unfilled circle).
\end{example}

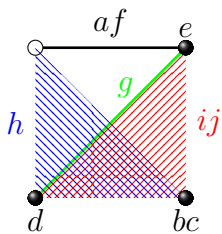
\begin{figure}[h]
\begin{center}
\begin{tikzpicture}

\draw [line width=1pt, color=green ] (2,-2)--(4,0);
\draw [line width=1pt  ] (2.1,0)--(4,0);

\path  (2,0)--(2,-2)  node [pos=.5, left ]{${\color{blue}{h}}$};
\path  (4,0)--(4,-2)  node [pos=.5, right ]{${\color{red}{ij}}$};
\path  (2,0)--(4,0)  node [pos=.5, above ]{$af$};
\path  (4,0)--(4,0)  node [pos=.5, above ]{$e$};
\path  (4,-2)--(4,-2)  node [pos=.5, below ]{$bc$};
\path  (2,-2)--(2,-2)  node [pos=.5, below ]{$d$};
\path  (2,-2)--(4,0)  node [pos=.6, above ]{${\color{green}{g}}$};
\path [pattern=north west lines, pattern  color=blue]   (2,0)--(4,-2)--(2,-2)--cycle;
\path [pattern=north east lines, pattern color=red]   (4,0)--(4,-2)--(2,-2)--cycle;

\draw  [shape=circle] (2,0) circle (.1);
\shade [shading=ball, ball color=black]  (4,0) circle (.1) ;
\shade [shading=ball, ball color=black]  (2,-2) circle (.1);
\shade [shading=ball, ball color=black]  (4,-2) circle (.1);

\end{tikzpicture}
\end{center}
\caption{Labeled hypergraph of a squarefree monomial ideal.}\label{fig.LH}
\end{figure}

Kimura {\it et. al.} \cite{KTY} introduced the \emph{unlabeled} version of $H_I$. This unlabeled version of $H_I$ coincides with the \emph{dual hypergraph} of the hypergraph whose edge ideal is $I$ (cf. \cite{Berge}). We choose not to use the notion of dual hypergraphs in order not to get confused with the \emph{Alexander dual}. %We refer to the unlabeled hypergraph obtained from a labeled hypergraph $H$ by omitting the labels as the \emph{support} of $H$.
Note that different squarefree monomial ideals may give the same unlabeled hypergraph. For instance, $I = (ac,bc)$ and $J = (acd,bcd)$ both give the unlabeled hypergraph on $\{1,2\}$ with edges $\{\{1\},\{2\},\{1,2\}\}$. Note also that not all unlabeled hypergraphs come from squarefree monomial ideals. It was pointed out in \cite{KTY} that unlabeled hypergraphs associated to squarefree monomial ideals possess certain \emph{separateness} property. This property also applies to labeled hypergraphs.

\begin{definition} \label{def.separatedLH}
A labeled hypergraph $H = (V,f)$ is \emph{separated} if for every pair of vertices $v,w \in V$, there exist edges $F,G \in \E$ such that $v \in F \setminus G$ and $w \in G \setminus F$.
\end{definition}

Clearly, since the vertices of $H_I$ correspond to minimal generators of $I$, $H_I$ is separated. Conversely, if $H = (V,f)$ is a separated labeled hypergraph, then we can define a squarefree monomial ideal $I_H$ in the polynomial ring $k[\A]$ by taking
$$I_H = \langle \prod_{v \in f(a)} a ~\big|~ v \in V \rangle.$$
It is easy to see that $H_{I_H}$ is the same as $H$ up to a permutation of the vertices. We summarize this property in the following proposition.

\begin{proposition} \label{prop.1-1}
There is a one-to-one correspondence
$$\left\{ \begin{array}{ll} \text{squarefree monomial} \\ \text{ideals in } k[x_1, \dots, x_n]\end{array} \right\} \longleftrightarrow \left\{\begin{array}{ll} \text{separated labeled hypergraphs} \\ \text{on } \A = \{x_1, \dots, x_n\} \\ \text{up to vertex permutation}\end{array} \right\},$$
with $I \mapsto H_I$ and $I_H \mapsfrom H$.
\end{proposition}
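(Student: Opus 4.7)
The plan is to verify that the two assignments $I\mapsto H_I$ and $H\mapsto I_H$ are mutually inverse, modulo the obvious relabeling of vertices, by unraveling the definitions.

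First, I would check that $I\mapsto H_I$ really lands in separated hypergraphs. Given minimal generators $F_1,\dots,F_m$ of a squarefree monomial ideal $I$, for any $i\neq j$ the monomials $F_i$ and $F_j$ are divisibility-incomparable, so there exist $x,y\in\A$ with $x\mid F_i,\ x\nmid F_j$ and $y\mid F_j,\ y\nmid F_i$. The edges $f(x)=\{k : x\mid F_k\}$ and $f(y)$ then witness the separateness condition at the pair $\{i,j\}$.

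Next, I would verify the two round-trips. For $I\mapsto H_I\mapsto I_{H_I}$, each vertex $j$ of $H_I$ produces the generator $\prod_{x\in\A,\, j\in f(x)} x = \prod_{x\in\A,\, x\mid F_j} x = F_j$ of $I_{H_I}$, where the last equality uses that $F_j$ is squarefree; hence $I_{H_I}=I$. For the opposite composition $H\mapsto I_H\mapsto H_{I_H}$, the crux is to show that the monomials $F_v:=\prod_{a\in\A,\, v\in f(a)} a$, for $v\in V$, are pairwise divisibility-incomparable, so that they form precisely the minimal generating set of $I_H$ and the vertex set of $H_{I_H}$ is in canonical bijection with $V$. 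If $F_v\mid F_w$ for some $v\neq w$, then every $a\in\A$ with $v\in f(a)$ also satisfies $w\in f(a)$, contradicting the separateness of $H$ applied to the pair $(v,w)$. Once minimality is established, the edge function of $H_{I_H}$ sends each $a\in\A$ to $\{v\in V : a\mid F_v\}=\{v\in V : v\in f(a)\}=f(a)$, so $H_{I_H}=H$ after identifying the indexing set of the minimal generators with $V$, which is exactly the vertex permutation ambiguity allowed in the statement.

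The only substantive ingredient is the use of separateness to force distinctness and incomparability (hence minimality) of the monomials $F_v$; the rest is a direct bookkeeping check of the two definitions, so I expect no real obstacle beyond keeping the indices straight.
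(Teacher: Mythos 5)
Your argument is correct and follows exactly the route the paper intends: it checks that minimality of the generators gives separateness of $H_I$, and that separateness forces the monomials $F_v$ to be pairwise incomparable so that both composites are the identity up to vertex permutation. The paper itself dispatches these verifications with ``clearly'' and ``it is easy to see'' in the paragraph preceding the proposition, so your write-up simply supplies the details of the same proof.
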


\begin{remark} Let $H$ be a hypergraph on $n$ vertices and $m$ edges. The \emph{incidence matrix} of $H$ is an $n \times m$ matrix whose $(i,j)$-entry is 1 if the $i$th vertex belongs to the $j$th edge and 0 otherwise. For a labeled hypergraph $H = H_I$, if we think of an edge with $t$ labels as $t$ distinct copies of that edge, then the incidence matrix of $H$ is exactly the transpose of the incidence matrix of the hypergraph whose edge ideal is $I$. This fact no longer holds if we restrict to unlabeled hypergraphs. This is another reason as why we choose to use the notion of labeled hypergraphs instead of those of unlabeled and dual hypergraphs.
\end{remark}

\noindent{\bf Notation.} Let $\P$ be a 0-1 polytope in $\RR^n$ and let $I_\P$ be its associated squarefree monomial ideal. Let $H_\P$ be the labeled hypergraph corresponding to $I_\P$. For simplicity of notation, we shall also denote the Ehrhart and polytopal rings of $\P$ by $\A[H_\P]$ and $k[H_\P]$, respectively. On the other hand, for a separated labeled hypergraph $H$, we shall denote by $\P_H$ the 0-1 polytope whose vertices correspond to the generators of the squarefree monomial ideal $I_H$. Proposition \ref{prop.1-1} allows us to move freely back and forth between 0-1 polytopes, squarefree monomial ideals and separated labeled hypergraphs. When using the notation $k[H]$, we often use the terminology \emph{toric ring} instead of \emph{polytopal ring}.

%%%%%%%%%%%%%%%%%%%%%%%%%%%%%%%%%%%%%%%%%%%%%%%%%%%%%%%%%%%%%

\section{Combinatorial criterion and simple reductions} \label{sec.red}

In this section, we recall a combinatorial criterion for the normality of 0-1 polytopes. We shall also provide a number of simple reductions that reduce the question to that of smaller or simpler hypergraphs.

%We shall first recall the Carath\'eodory's theorem (cf. \cite[Theorem A.2.1]{SH}).

%\begin{theorem} \label{thm.Caratheodory}
%Let $n$ be a positive integer, and $\a_1, \dots, \a_s \in \RR^n$. Let $S$ be the set of all linearly independent subsets of $\{\a_1, \dots, \a_s\}$. Then
%$$\a_1 F_+ + \dots + \a_s F_+ = \bigcup_{\{\a_{i_1}, \dots, \a_{i_r}\} \in S} \big( \a_{i_1}F_+ + \dots + \a_{i_r}F_+\big),$$
%where $F$ stands for $\RR$ or $\QQ$.
%\end{theorem}

For an interval $J \subseteq \RR$, let $\QQ_J$ denote the set of rational numbers in $J$. %We shall often employ the following combinatorial criterion for the equality between Ehrhart and toric rings throughout the paper.

\begin{proposition} \label{thm.normalcondition}
Let $\P \subseteq \RR^n$ be a 0-1 polytope and assume that $E = \{\a_1, \dots, \a_s\}$ is the vertex set of $\P$. The polytope $\P$ is normal if and only if whenever $\a = \sum_{i=1}^s c_i\a_i \in \NN^n$ for $0 \le c_i < 1$ and $\sum_{i=1}^s c_i \in \ZZ$ then $\a$ can be rewritten as $\a = \sum_{i=1}^s d_i\a_i$ where $d_i \in \NN$ and $\sum_{i=1}^s d_i = \sum_{i=1}^s c_i$.
\end{proposition}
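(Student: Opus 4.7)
The plan is to translate the normality condition into a Hilbert-basis style statement about integer points of dilates of $\P$, and then to reduce arbitrary integer decompositions to ones with fractional coefficients.

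By Remark \ref{rmk.NP} and the definitions of $\A[\P]$ and $k[\P]$, the polytope $\P$ is normal if and only if for every $t \in \NN$ and every $\a \in t\P \cap \ZZ^n$ one can write $\a = \sum_{i=1}^s d_i \a_i$ with $d_i \in \NN$ and $\sum d_i = t$; here I use that the only integer points of the 0-1 polytope $\P$ are its vertices $\a_1,\dots,\a_s$. The condition in the proposition concerns only the restricted case $0 \le c_i < 1$, so the work is to show that this restricted case suffices.

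For the forward direction, suppose $\a = \sum c_i \a_i \in \NN^n$ with $0 \le c_i < 1$ and $t := \sum c_i \in \ZZ$. If $t = 0$ then every $c_i = 0$ and the claim is trivial; otherwise $\a/t = \sum (c_i/t)\a_i$ is a convex combination of vertices, so $\a/t \in \P$ and $\a \in t\P \cap \ZZ^n$. The normality of $\P$ then supplies the desired integer decomposition.

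For the backward direction, let $\a \in t\P \cap \ZZ^n$. Expressing $\a/t$ as a convex combination of vertices and scaling by $t$ yields $\a = \sum c_i \a_i$ with $c_i \ge 0$ and $\sum c_i = t$. Split each coefficient as $c_i = \lfloor c_i \rfloor + \{c_i\}$ and set
$$\a' \;:=\; \a - \sum_i \lfloor c_i \rfloor \a_i \;=\; \sum_i \{c_i\}\, \a_i.$$
The crucial step, which I expect to be the main obstacle, is checking that $\a' \in \NN^n$. Its $j$-th coordinate is $\sum_{i:\,(\a_i)_j = 1} \{c_i\}$, which is manifestly nonnegative and equals the integer $\a_j - \sum_{i:\,(\a_i)_j=1}\lfloor c_i \rfloor$. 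This is precisely where the 0-1 hypothesis enters: if some vertex $\a_i$ had a coordinate $\ge 2$, fractional parts in that coordinate would get weighted by $(\a_i)_j$ and could fail to assemble into an integer. Once $\a' \in \NN^n$ is established, the proposition's hypothesis applies to the representation $\a' = \sum \{c_i\}\a_i$, whose coefficients lie in $[0,1)$ and sum to the integer $t - \sum \lfloor c_i \rfloor$, producing $e_i \in \NN$ with $\a' = \sum e_i \a_i$ and $\sum e_i = \sum \{c_i\}$. Setting $d_i := \lfloor c_i \rfloor + e_i$ gives $\a = \sum d_i \a_i$ with $\sum d_i = t$, which finishes the proof.
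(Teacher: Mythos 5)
Your proof is correct and follows essentially the same route as the paper: both reduce normality to the statement $t\P \cap \ZZ^n = \{\sum d_i\a_i : d_i \in \NN,\ \sum d_i = t\}$ (using that the only lattice points of a 0-1 polytope are its vertices) and then pass between arbitrary nonnegative rational coefficients and coefficients in $[0,1)$ by splitting off integer parts --- you merely spell out the detail the paper compresses into ``by adding positive integer multiples of $\a_i$'s.'' One small quibble: your aside about where the 0-1 hypothesis enters is misplaced, since $\a' = \a - \sum_i \lfloor c_i\rfloor \a_i$ is an integer vector for \emph{any} integral $\a_i$ (each coordinate is a difference of integers, and nonnegativity comes from $\{c_i\}(\a_i)_j \ge 0$); the 0-1 hypothesis is really used in the first step, to know $\P \cap \ZZ^n = E$, but this does not affect the validity of your argument.
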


\begin{proof} Let $\FF$ represent $\NN, \QQ_{\ge 0}$ or $\QQ_{[0,1)}$, and denote by $(\FF E)_t$ the set $$\{\sum_{i=1}^s c_i\a_i ~|~ c_i \in \FF \text{ and } \sum_{i=1}^s c_i = t\}.$$
Since $E$ consists of all integral points in $\P$ it follows from the definition that $\P$ is normal if and only if $t\P \cap \ZZ^n = (\NN E)_t$ for all $t \in \NN$, i.e.,
\begin{align}
(\QQ_{\ge 0} E)_t \cap \ZZ^n = (\NN E)_t \ \forall \ t \in \NN. \label{eq.normal}
\end{align}

Clearly, if the equality in (\ref{eq.normal}) holds then $(\QQ_{[0,1)}E)_t \cap \ZZ^n \subseteq (\NN E)_t$ for all $t \in \NN$. On the other hand, if $(\QQ_{[0,1)}E)_t \cap \ZZ^n \subseteq (\NN E)_t$ for any $t \in \NN$ then by adding positive integer multiples of $\a_i$'s, the equality (\ref{eq.normal}) also holds.
\end{proof}

\begin{remark} \label{rmk.referee}
If $\P$ is normal, i.e., $\A[\P] = k[\P]$, then the polytopal ring $k[\P]$ is normal. The converse is not necessarily true; that is, the normality of the polytopal ring $k[\P]$ does not imply the normality of $\P$. In fact, the characterization in \cite[Proposition 3.5]{DV} states that $\A[\P] = k[\P]$ if and only if $k[\P]$ is normal and $\ZZ^{n+1}/\ZZ\{(\a_1, 1), \dots, (\a_s,1)\}$ is torsion-free.

The following particular example was provided to us by an anonymous referee. Consider the 0-1 polytope $\P$ in $\NN^7$ with vertices $\a_1, \dots, \a_6$ being the columns of the following matrix:
$$\left(\begin{array}{cccccc} 1 & 1 & 0 & 0 & 0 & 0 \\ 1 & 0 & 1 & 0 & 0 & 0 \\ 0 & 1 & 1 & 0 & 0 & 0 \\ 0 & 0 & 0 & 1 & 1 & 0 \\ 0 & 0 & 0 & 1 & 0 & 1 \\ 0 & 0 & 0 & 0 & 1 & 1 \\ 0 & 0 & 1 & 0 & 0 & 1 \end{array} \right).$$
The polytopal ring of $\P$ is $k[\P] = k[x_1x_2y, x_1x_3y, x_2x_3x_7y, x_4x_5y, x_4x_6y, x_5x_6x_7y]$ has dimension 6, so it is isomorphic to a polynomial ring in six variables. Thus, $k[\P]$ is normal. On the other hand, $\a = \sum_{i=1}^6 \dfrac{1}{2} \a_i = (1, \dots, 1) \in 3\P \cap \ZZ^7$, but $\a$ cannot be written as an integral linear combination of the points $\a_i$'s. Therefore, we can see directly that $\P$ is not a normal polytope.
\end{remark}

The following result allows us to remove all closed vertices from a labeled hypergraph in examining the equality $\A[H] = k[H]$.

\begin{proposition} \label{prop.removingclosevertices}
Let $H$ be a labeled hypergraph, and let $H'$ be the labeled hypergraph obtained from $H$ by removing all closed vertices (and contracting edges containing these vertices) from $H$. Then $\A[H] = k[H]$ if and only if $\A[H'] = k[H']$.
\end{proposition}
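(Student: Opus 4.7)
My plan is to reduce to removing a single closed vertex and then apply the combinatorial criterion of Proposition~\ref{thm.normalcondition} via a straightforward lifting/projection along coordinates.

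By induction on the number of closed vertices, it suffices to treat the case where $H'$ is obtained from $H$ by removing a single closed vertex; after relabeling I may assume this vertex is $m$. Writing $H = H_I$ with $I = (F_1, \dots, F_m) \subseteq k[x_1,\dots,x_n]$ via Proposition~\ref{prop.1-1}, the hypothesis that $m$ is closed means that $X_m := \{x_j \in \A : f(x_j) = \{m\}\}$ is nonempty; concretely, $X_m$ consists of the variables that divide $F_m$ but no other generator. A direct unwinding of Definition~\ref{def.subhypergraph} and Proposition~\ref{prop.1-1} then shows that $H' = H_{I'}$ with $I' = (F_1,\dots,F_{m-1})$ regarded as an ideal of $k[\A \setminus X_m]$. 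In particular, if $\a_1,\dots,\a_m$ denote the vertices of $\P$, then $\a_m$ is the unique one having nonzero entries in the coordinates indexed by $X_m$, and the vertices $\a'_1,\dots,\a'_{m-1}$ of $\P'$ are obtained from $\a_1,\dots,\a_{m-1}$ by deleting the $X_m$-coordinates.

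Armed with this dictionary, I would then verify the equivalence by applying Proposition~\ref{thm.normalcondition} in each direction. For the ``$\Leftarrow$'' direction, given a representation $\a = \sum_{i=1}^m c_i \a_i \in \NN^n$ satisfying the criterion's hypotheses, any coordinate from $X_m$ forces $c_m = 0$ (it is a non-negative integer lying in $[0,1)$); projecting away the $X_m$-coordinates transfers the representation to a legitimate one for $\P'$, and lifting back an integral decomposition (padding zeros in the $X_m$-coordinates and taking $d_m = 0$) yields the desired integral decomposition of $\a$. For the ``$\Rightarrow$'' direction, I would lift a representation $\a' = \sum_{i=1}^{m-1} c_i \a'_i$ for $\P'$ to $\a \in \NN^n$ by inserting zeros in the $X_m$-coordinates and declaring $c_m = 0$, apply the criterion to $\P$, observe that $d_m = 0$ by inspecting any $X_m$-coordinate, and then project back to the complement of $X_m$.

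I do not expect any real obstacle here; the substantive content is simply the dictionary between the hypergraph operation and the corresponding ideal/polytope operation. The only care needed is in tracking how the contraction step affects the ambient polynomial ring --- the labels in $X_m$ disappear entirely while all other labels remain, merely restricted to the edges not touching $m$ --- so that the polytope $\P'$ is literally the coordinate projection of $\P \setminus \{\a_m\}$ away from the $X_m$-coordinates. Once that translation is in hand, Proposition~\ref{thm.normalcondition} makes both implications immediate.
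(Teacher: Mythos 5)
Your proof is correct and follows essentially the same route as the paper's: in both, the key observation is that a label unique to a closed vertex forces that vertex's coefficient to vanish in any representation considered by Proposition \ref{thm.normalcondition}, so the criterion for $\P_H$ and $\P_{H'}$ coincide. (The paper treats all closed vertices simultaneously rather than one at a time, which sidesteps the minor bookkeeping point that deleting a closed vertex can create new closed vertices, so your induction should be phrased as iterating the single-vertex step over the closed vertices of the original $H$ only.)
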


\begin{proof} Let $(x^{\a_1}, \dots, x^{\a_s}) = I_H$ be the squarefree monomial ideal of $H$. Assume that $x^{\a_s}$ corresponds to a closed vertex $s$ in $H$ and let $v$ be the label of $s$ that does not belong to any other edge. Take any $\a = \sum_{i=1}^s c_i \a_i \in \NN^n$ for $0 \le c_i < 1$. Observe that the power of $v$ in $x^\a$ is exactly $c_s$. Thus, for $\a$ to be an integral point, we must have $c_s = 0$. In other words, in applying Proposition \ref{thm.normalcondition} to $\P_H$, we can omit the vertex $s$ in $H$. This is true for any closed vertex in $H$. Therefore, the criterion of Proposition \ref{thm.normalcondition} is equivalent when applying to $\P_H$ and $\P_{H'}$.
\end{proof}

\begin{example} Consider the hypergraph $H$ in Figure \ref{fig.RemoveClose}. By successively removing closed vertices, $H$ is reduced to $H''$ (as in Figure \ref{fig.Removed}) which does not contain any closed vertices. Proposition \ref{prop.removingclosevertices} says that $\A[H] = k[H]$ if and only if $\A[H''] = k[H'']$. %It follows from Theorem \ref{thm.exceptional} later that $k[H'']$ is not normal. Thus, we conclude that $k[H]$ is not normal.

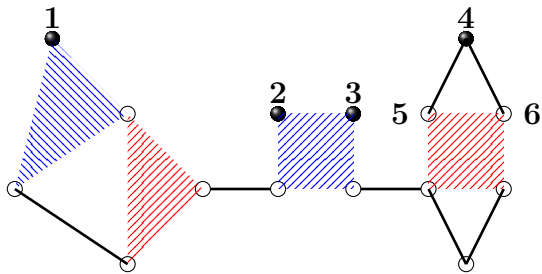
\begin{figure}[h]
\begin{center}
\begin{tikzpicture}

\draw  [shape=circle] (-3.5,0) circle (.1);
\draw  [shape=circle] (-2,1) circle (.1);
\draw  [shape=circle] (-2,-1) circle (.1);
\draw  [shape=circle] (-1,0) circle (.1);
\draw  [shape=circle] (0,0) circle (.1);
\draw  [shape=circle] (2,1) circle (.1) node [left] {$\bf{5}$};
\draw  [shape=circle] (3,1) circle (.1)node [right] {$\bf{6}$};
\draw  [shape=circle] (2,0) circle (.1);
\draw  [shape=circle] (3,0) circle (.1);
\draw  [shape=circle] (2.5,-1) circle (.1);
\draw  [shape=circle] (1,0) circle (.1);
\shade [shading=ball, ball color=black]  (-3,2) circle (.1) node [above] {$\bf{1}$};
\shade [shading=ball, ball color=black]  (0,1) circle (.1) node [above] {$\bf{2}$};
\shade [shading=ball, ball color=black]  (1,1) circle (.1) node [above] {$\bf{3}$};
\shade [shading=ball, ball color=black]  (2.5,2) circle (.1) node [above] {$\bf{4}$};

\draw [line width=1pt ] (-3.5,0)--(-2,-1)  ;
\draw [line width=1pt ] (-0.9,0)--(-0.1,0)  ;
\draw [line width=1pt ] (1.1,0)--(1.9,0)  ;
\draw [line width=1pt ] (2,1)--(2.5,2)  ;
\draw [line width=1pt ] (2.5,2)--(3,1)  ;
\draw [line width=1pt ] (2,0)--(2.5,-1)  ;
\draw [line width=1pt ] (2.5,-1)--(3,0)  ;

\path [pattern=north west lines, pattern  color=blue]   (-3,2)--(-3.5,0)--(-2,1)--cycle;
\path [pattern=north east lines, pattern color=red]     (-2,1)--(-2,-1)--(-1,0)--cycle;
\path [pattern=north east lines, pattern color=blue]     (0,1)--(1,1)--(1,0)--(0,0)--cycle;
\path [pattern=north east lines, pattern color=red]     (2,0)--(3,0)--(3,1)--(2,1)--cycle;

\end{tikzpicture}
\end{center}
\caption{A labeled hypergraph $H$.}\label{fig.RemoveClose}
\end{figure}

\begin{figure}[h]
\begin{center}
\begin{tikzpicture}

\draw  [shape=circle] (-3.5,-3) circle (.1);
\draw  [shape=circle] (-2,-2) circle (.1);
\draw  [shape=circle] (-2,-4) circle (.1);
\draw  [shape=circle] (-1,-3) circle (.1);
\draw  [shape=circle] (0,-3) circle (.1);
\draw  [shape=circle] (1,-3) circle (.1);
\shade [shading=ball, ball color=black] (2,-2) circle (.1) node [left] {$\bf{5}$};
\shade [shading=ball, ball color=black]  (3,-2) circle (.1) node [right] {$\bf{6}$};
\draw  [shape=circle] (2,-3) circle (.1);
\draw  [shape=circle] (3,-3) circle (.1);
\draw  [shape=circle] (2.5,-4) circle (.1);
\draw [line width=1pt ] (-3.5,-3)--(-2,-4)  ;
\draw [line width=1pt ] (-3.5,-3)--(-2,-2)  ;
\draw [line width=1pt ] (0.1,-3)--(0.9,-3)  ;
\draw [line width=1pt ] (-0.9,-3)--(-0.1,-3)  ;
\draw [line width=1pt ] (1.1,-3)--(1.9,-3)  ;
\draw [line width=1pt ] (2,-3)--(2.5,-4)  ;
\draw [line width=1pt ] (2.5,-4)--(3,-3)  ;

\path [pattern=north east lines, pattern color=red]     (-2,-2)--(-2,-4)--(-1,-3)--cycle;
\path [pattern=north east lines, pattern color=red]     (2,-3)--(3,-3)--(3,-2)--(2,-2)--cycle;

\end{tikzpicture}
\end{center}
\caption{$H'$ obtained by removing vertices $1,2,$ and $4$ from $H$.}
\end{figure}
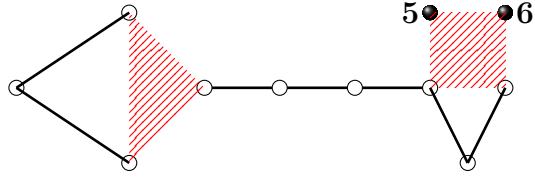

\begin{figure}[h]
\begin{center}
\begin{tikzpicture}

\draw  [shape=circle] (-3.5,-6) circle (.1);
\draw  [shape=circle] (-2,-5) circle (.1);
\draw  [shape=circle] (-2,-7) circle (.1);
\draw  [shape=circle] (-1,-6) circle (.1);
\draw  [shape=circle] (0,-6) circle (.1);
\draw  [shape=circle] (1,-6) circle (.1);
\draw  [shape=circle] (2,-6) circle (.1);
\draw  [shape=circle] (3,-6) circle (.1);
\draw  [shape=circle] (2.5,-7) circle (.1);
\draw [line width=1pt ] (-3.5,-6)--(-2,-7)  ;
\draw [line width=1pt ] (-3.5,-6)--(-2,-5)  ;
\draw [line width=1pt ] (0.1,-6)--(0.9,-6)  ;
\draw [line width=1pt ] (-0.9,-6)--(-0.1,-6)  ;
\draw [line width=1pt ] (1.1,-6)--(1.9,-6)  ;
\draw [line width=1pt ] (2,-6)--(2.5,-7)  ;
\draw [line width=1pt ] (2.5,-7)--(3,-6)  ;
\draw [line width=1pt ] (2.1,-6)--(2.9,-6)  ;
\path [pattern=north east lines, pattern color=red]     (-2,-5)--(-2,-7)--(-1,-6)--cycle;

\end{tikzpicture}
\end{center}
\caption{$H''$ obtained by removing vertices $5$ and $6$ from $H'$.} \label{fig.Removed}
\end{figure}
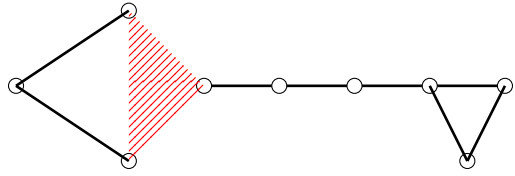
\end{example}

\begin{remark} Removing closed vertices from $H$ has the same effect as removing prime elements from the affine semigroup of integral points in the cone of $\P_H$.
\end{remark}

The next observation inspires us to focus only to labeled hypergraphs which do contain special odd cycles. Note that hypergraphs without special odd cycles are called \emph{balanced} hypergraphs; their incidence matrices are also called \emph{balanced} matrices. Balanced hypergraphs, matrices and simplicial complexes are well studied from both algorithmic and theoretical perspectives (cf. \cite{S}).

\begin{proposition} \label{thm.nooddcycle}
Let $H = H_I$ be a balanced labeled hypergraph and assuming that the minimal generators of $I$ are of the same degree. Then $\A[H] = k[H]$.
\end{proposition}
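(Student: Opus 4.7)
The plan is to apply the combinatorial criterion of Proposition~\ref{thm.normalcondition}. So given $\a \in \NN^n$ presented as $\a = \sum_{i=1}^{s} c_i \a_i$ with $0 \le c_i < 1$ and $t := \sum_{i=1}^s c_i \in \ZZ$, the goal is to produce nonnegative integers $d_1, \ldots, d_s$ satisfying $\a = \sum d_i \a_i$ and $\sum d_i = t$. I would rephrase this as a linear-algebra problem: let $A$ be the $n \times s$ matrix whose $i$-th column is $\a_i$, which is precisely the incidence matrix of the (unlabeled) hypergraph $H'$ whose edge ideal is $I$. The fractional decomposition above exhibits a nonnegative real solution of $A y = \a$, and I want an integer one.

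The key input is the classical integrality theorem for balanced $0/1$ matrices due to Berge and to Fulkerson--Hoffman--Oppenheim: if $A$ is balanced, then for every integer vector $b$ the polyhedron $\{y \ge 0 : A y = b\}$ is integral whenever it is non-empty. To bring this to bear, I would invoke Berge's theorem that balancedness is preserved under hypergraph duality (equivalently, under transposing the incidence matrix); since $H_I$ is the dual of $H'$, the hypothesized balancedness of $H_I$ transfers to balancedness of $A$. Applying the integrality theorem with $b = \a$ then delivers integers $(d_1, \ldots, d_s) \in \NN^s$ with $A(d_1, \dots, d_s)^T = \a$, i.e.\ $\a = \sum d_i \a_i$.

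The remaining book-keeping step, verifying $\sum d_i = t$, is where the uniform-degree hypothesis enters cleanly: each column of $A$ has coordinate sum equal to the common degree $d$ of the generators of $I$, so $\mathbf{1}^T A = d \cdot \mathbf{1}^T$, whence $d \sum d_i = \mathbf{1}^T \a = d \sum c_i = d t$ forces $\sum d_i = t$. I expect the main obstacle to be pinning down the balanced-matrix integrality theorem in the exact equality form $A y = b$ needed here, rather than the more commonly stated inequality form $A y \le b$ with $\{0,1\}$-valued right-hand side; if a direct reference is not readily available, one could derive the equality form via slack-variable manipulations, or argue by induction on $t$, peeling off one generator at a time and exploiting that the support sub-hypergraph inherits balancedness.
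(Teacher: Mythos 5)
Your overall strategy---translating the criterion of Proposition \ref{thm.normalcondition} into a polyhedral integrality statement for the balanced matrix $A$---is in the right spirit, but the key lemma you invoke is false. There is no theorem asserting that for a balanced $0$-$1$ matrix $A$ and an \emph{arbitrary} integral vector $b$ the polyhedron $\{y \ge 0 : Ay = b\}$ is integral. The Fulkerson--Hoffman--Oppenheim theorem (and its Conforti--Cornu\'ejols refinements) only covers right-hand sides equal to $\mathbf{1}$, or more generally bounds with entries in $\{0,1,\infty\}$; integrality for \emph{all} integral right-hand sides is equivalent to total unimodularity (Hoffman--Kruskal), and balanced matrices need not be totally unimodular. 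Concretely, take
$$A=\begin{pmatrix}1&1&1&1\\1&1&0&0\\1&0&1&0\\1&0&0&1\end{pmatrix}, \qquad b=\begin{pmatrix}2\\1\\1\\1\end{pmatrix}.$$
This $A$ is balanced (a violating $3\times 3$ submatrix would need all row and column sums equal to $2$: any submatrix using row $1$ has a row of sum $3$, and the submatrix on rows $2,3,4$ has either a column of sum $3$ or is the identity), yet $\{y\ge 0: Ay=b\}$ is the single non-integral point $(\tfrac12,\tfrac12,\tfrac12,\tfrac12)$. So the step producing your integral vector $d$ does not go through, and neither proposed fallback repairs it: adding slack variables reduces to integrality of $\{y\ge 0 : Ay\le b\}$ for all integral $b$, which again forces total unimodularity.

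The structural defect is that you use the uniform-degree hypothesis only for bookkeeping at the end, whereas it must enter the existence step itself (the paper's Example in Figure \ref{fig.refExample} shows the conclusion genuinely fails without it). A correct version of your route does exist: balanced hypergraphs are Mengerian, i.e.\ the system $x\ge 0$, $Mx\ge\mathbf{1}$ is totally dual integral for $M=A^{T}$, so the linear program $\max\{\mathbf{1}^{T}y : y\ge 0,\ Ay\le \a\}$ has an integral optimal solution; since every column of $A$ has coordinate sum $d$, every feasible $y$ satisfies $\mathbf{1}^{T}y\le t$, the fractional solution $(c_i)$ attains $t$, and any optimal integral $y$ with $\mathbf{1}^{T}y=t$ must satisfy $Ay=\a$ by comparing coordinate sums. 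Note that total dual integrality only guarantees an integral \emph{optimum} of this particular program, not integrality of the whole polyhedron---that weaker statement is exactly what survives for balanced matrices. This is essentially what the paper does in algebraic packaging: balancedness gives $I^{(t)}=I^{t}$ by \cite[Theorem 2.5]{HHTZ}, hence $I^{t}=\overline{I^{t}}$, so $x^{\a}\in I^{t}$, i.e.\ $x^{\a}=x^{\delta}\prod_{i}(x^{\a_i})^{d_i}$ with $\sum_i d_i=t$, and only then is the uniform-degree hypothesis used to force $\delta=0$.
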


\begin{proof} Let $\Delta = \Delta(I)$ be the simplicial complex whose facets correspond to generators of $I$. By looking at the incidence matrices and their transposes, since $H$ is balanced, we have that $\Delta$ is also balanced. It follows from \cite[Theorem 2.5]{HHTZ} (see also \cite[Theorem 4.6 and Proposition 4.10]{GRV}) that $I^{(t)} = I^t$ for all $t \in \NN$. Since $I^t \subseteq \overline{I^t} \subseteq I^{(t)}$, we now have $I^t = \overline{I^t}$ for all $t \in \NN$.

Let $\P = \P_H$ be the 0-1 polytope associated to $I$ and let $E = \{\a_1, \dots, \a_s\}$ be the vertex set of $\P$. Clearly, any $\a \in (\QQ_{\ge 0}E)_t \cap \ZZ^n$ would give rise to an integral equation for $x^\a$, and so $x^\a \in \overline{I^t} = I^t$. That is, $x^\a = x^\delta (x^{\a_1})^{d_1} \dots (x^{\a_s})^{d_s}$, where $\delta \in \NN^n$, $d_i \in \NN$ and $\sum_{i=1}^s d_i = t$. Since the minimal generators of $I$ are of the same degree, i.e., $|\a_1| = \dots = |\a_s|$, this and the fact that $\a \in (\QQ_{\ge 0}E)_t$ imply that $\delta = 0$. Thus, $(\QQ_{\ge 0}E)_t \cap \ZZ^n = (\NN E)_t$ for any $t \in \NN$, and the result is proved by Proposition \ref{thm.normalcondition}.
\end{proof}

\begin{remark} A shorter argument to Proposition \ref{thm.nooddcycle} can be obtained by making use of \cite[Corollary 1.7]{HHTZ} or \cite[Proposition 4.4]{GRV} after deducing that $I^{(t)} = I^t$ for all $t \in \NN$.
\end{remark}

\begin{example} The condition that $I$ is generated in a single degree in Proposition \ref{thm.nooddcycle} is important. The following example, given in \cite[Example 3.10]{MOV}, illustrates that. Let
$$I = (x_1x_2x_3x_4,x_5x_6x_7x_8,x_1x_5,x_2x_6,x_3x_7,x_4x_8) \subseteq k[x_1, \dots, x_8].$$
Then the labeled hypergraph $H = H_I$ is depicted in Figure \ref{fig.refExample}. Since $H$ is a bipartite graph, $H$ contains no odd cycles, i.e., $H$ is balanced. On the other hand, a Macaulay 2 computation shows that $\A[H] \not= k[H]$.

\begin{figure}[h]
\begin{center}
\begin{tikzpicture}
\draw  [shape=circle] (2,1) circle (.1) node [left] {$H_I$};
\draw  [shape=circle] (3,1) circle (.1) ;
\draw  [shape=circle] (1,0) circle (.1) ;
\draw  [shape=circle] (2,0) circle (.1) ;
\draw  [shape=circle] (3,0) circle (.1) ;
\draw  [shape=circle] (4,0) circle (.1) ;

\draw [line width=1.2pt ] (2,1)--(1,0)  ;
\draw [line width=1.2pt ] (2,1)--(2,0)  ;
\draw [line width=1.2pt ] (2,1)--(3,0)  ;
\draw [line width=1.2pt ] (2,1)--(4,0)  ;
\draw [line width=1.2pt ] (3,1)--(1,0)  ;
\draw [line width=1.2pt ] (3,1)--(2,0)  ;
\draw [line width=1.2pt ] (3,1)--(3,0)  ;
\draw [line width=1.2pt ] (3,1)--(4,0)  ;

\end{tikzpicture}
\end{center}
\caption{A balanced hypergraph $H$ with $\A[H] \not= k[H]$.} \label{fig.refExample}
\end{figure}
\end{example}

In investigating when $\A[H] \not= k[H]$, it turns out that the strict containment $k[H] \varsubsetneq \A[H]$ for a smaller hypergraph implies that of larger hypergraphs. We prove this observation in the following theorem.

\begin{theorem} \label{thm.subhypergraph}
Let $H$ be a labeled hypergraph and let $H'$ be a minor of $H$. If $\A[H'] \not= k[H']$, then $\A[H] \not= k[H]$.
\end{theorem}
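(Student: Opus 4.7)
The plan is to reduce to a single edge deletion and then transport a non-normality certificate from $\P' = \P_{H'}$ up to $\P = \P_H$ via Proposition \ref{thm.normalcondition}. Since a minor is obtained by a finite sequence of edge deletions, it suffices to handle the case $H' = H \setminus E$ for a single edge $E = f(a_0)$; the general statement then follows by induction on the number of deletions.

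I would set up the bookkeeping as follows. Write $I_H = (x^{\a_1}, \dots, x^{\a_s}) \subseteq k[X]$, where $X$ is the label set of $H$. The vertex set of $H'$ is $V \setminus E$ and its label set is $X' = \{a \in X \mid f(a) \not\subseteq E\}$. Two observations drive the whole argument. First, for every $i \notin E$ and every $a \in X \setminus X'$ one has $(\a_i)_a = 0$, since $a \notin X'$ forces $f(a) \subseteq E$ and hence $i \notin f(a)$; thus the vertices of $\P'$ can be identified with the $\a_i$'s for $i \notin E$, zero-extended into $\RR^X$. Second, for any label $b$ of $E$ (i.e., with $f(b) = E$), one has $b \in X \setminus X'$, together with $(\a_i)_b = 1$ for $i \in E$ and $(\a_i)_b = 0$ for $i \notin E$; so the coordinate indexed by $b$ ``detects'' precisely the vertices removed from $\P$ by the deletion.

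Using Proposition \ref{thm.normalcondition}, the assumption $\A[H'] \neq k[H']$ yields rationals $c_i \in [0,1)$ (for $i \notin E$) with $t := \sum_{i \notin E} c_i \in \NN$ and $\a' := \sum_{i \notin E} c_i \a_i \in \NN^{X'}$ that admits no decomposition $\sum_{i \notin E} d_i \a_i$ with $d_i \in \NN$ and $\sum d_i = t$. I would promote this to a witness for $\P$ as follows: zero-extend $\a'$ into $\NN^X$ via the first observation and set $c_i := 0$ for $i \in E$, so that $\a' = \sum_{i=1}^s c_i \a_i$ with all $c_i \in [0,1)$ and $\sum c_i = t$. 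Any hypothetical integer decomposition $\a' = \sum_{i=1}^s d_i \a_i$ with $d_i \in \NN$ and $\sum d_i = t$ must, upon reading off the $b$-coordinate and invoking the second observation, satisfy $\sum_{i \in E} d_i = 0$ and hence $d_i = 0$ for every $i \in E$. The remaining identity $\a' = \sum_{i \notin E} d_i \a_i$ is exactly the decomposition ruled out by the non-normality witness chosen for $\P'$, a contradiction; Proposition \ref{thm.normalcondition} then concludes $\A[H] \neq k[H]$.

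The main obstacle is conceptual rather than computational: one must set up the identification between $\P'$, which a priori lives in $\RR^{X'}$, and its zero extension in $\RR^X$ so that the linear-algebraic data of a non-normality witness transfers cleanly between the two polytopes. Once the ``detecting'' coordinate $b$ has been singled out, the remainder of the argument is essentially a one-line verification.
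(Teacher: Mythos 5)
Your proposal is correct and follows essentially the same route as the paper's proof: both transfer the non-normality witness from $\P_{H'}$ to $\P_{H}$ via Proposition \ref{thm.normalcondition} and force the coefficients of the deleted vertices to vanish by reading off the coordinate of a label of a deleted edge, which divides no generator of $I_{H'}$. The only difference is organizational --- you induct on single edge deletions while the paper handles all deletions at once --- and your single-step bookkeeping (identifying the generators of $I_{H'}$ with the zero-extended $\a_i$, $i \notin E$, and using a label $b$ with $f(b)=E$ as the detecting coordinate) is if anything slightly more explicit than the paper's.
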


\begin{proof} Suppose that
$\{x^{\a_1}, \dots, x^{\a_s}\} \text{ and } \{x^{\a_1}, \dots, x^{\a_s}, x^{\a_{s+1}}, \dots, x^{\a_t}\}$
are the minimal generators for $I_{H'}$ and $I_H$, respectively. Since $\A[H'] \not= k[H']$, by Proposition \ref{thm.normalcondition}, there exists
$$\a = \sum_{i=1}^s c_i\a_i \in \NN^n \text{ with } 0 < c_i < 1 \text{ and } \sum_{i=1}^s c_i \in \ZZ$$
such that $\a$ cannot be written as a nonnegative {\it integral} linear combination of the $\a_i$'s (with the same sum of coefficients). Suppose, by contradiction, that $\A[H] = k[H]$. Consider $\a = \sum_{j=1}^s c_j\a_j + \sum_{j=s+1}^t 0\cdot\a_j$. By Proposition \ref{thm.normalcondition}, we can write $\a$ as
$$\a = \sum_{i=1}^t d_i \a_i, \ d_i \in \NN \ \forall i \text{ and } \sum_{i=1}^t d_i = \sum_{j=1}^s c_j.$$

Let $z$ be any variable appearing in some of $\{\a_{s+1}, \dots, \a_t\}$ but not appearing in any of $\{\a_1, \dots, \a_s\}$. Clearly, the power of $z$ in $x^\a$ must be 0. Thus, $d_j = 0$ for any $j$ such that $z \big| x^{\a_j}$. Now, by the construction of induced subhypergraph, each generator $\{x^{\a_{s+1}}, \dots, x^{\a_t}\}$ belongs to some edges that were removed from $H$ to obtained $H'$. Therefore, in each generators $x^{\a_j}$, for $j = s+1, \dots, t$, there must be a variable $z$ that does not divide any of the generators of $I_{H'}$ (take a variable belonging to the label of a maximal face containing $x^{\a_j}$). Hence, $d_j = 0$ for all $j=s+1, \dots, t$, and we have
$$\a = \sum_{i=1}^s d_i\a_i, \ d_i \in \NN \ \forall i \text{ and } \sum_{i=1}^s d_i = \sum_{j=1}^s c_j.$$
This is a contradiction to assumption that $\A[H'] \not= k[H']$, and thus, $\A[H] \not= k[H]$.
\end{proof}

\begin{remark} Considering minors of $H$ has the same effect as restricting to faces of the polytope $\P_H$ on coordinate hyperplanes.
\end{remark}

\begin{figure}[h]
\begin{center}
\begin{tikzpicture}

\draw  [shape=circle] (-3,0) circle (.1);
\draw  [shape=circle] (-2.5,.5) circle (.1);
\draw  [shape=circle] (-1.5,.5) circle (.1);
\draw  [shape=circle] (-2.5,-.5) circle (.1);
\draw  [shape=circle] (-1.5,-.5) circle (.1);
\draw  [shape=circle] (-1,0) circle (.1) ;
\draw  [shape=circle] (0,0) circle (.1);
\draw  [shape=circle] (1,0) circle (.1);
\draw  [shape=circle] (2,0) circle (.1);
\draw  [shape=circle] (1,-1) circle (.1);
\draw  [shape=circle] (2,-1) circle (.1);
\draw  [shape=circle] (2.5,-.5) circle (.1);
\draw  [shape=circle] (-1,-1.5) circle (.1);
\draw  [shape=circle] (0,-1.5) circle (.1);
\draw  [shape=circle] (-1,-2.5) circle (.1);
\draw  [shape=circle] (0,-2.5) circle (.1);

\draw [line width=1pt ] (-2.4,.5)--(-1.6,.5)  ;
\draw [line width=1pt ] (-3,0)--(-2.5,.5)  ;
\draw [line width=1pt ] (-3,0)--(-2.5,-.5)  ;
\draw [line width=1pt ] (-0.9,0)--(-0.1,0)  ;
\draw [line width=1pt ] (0.1,0)--(0.9,0)  ;
\draw [line width=1pt ] (2,0)--(2.5,-.5)  ;
\draw [line width=1pt ] (2,-1)--(2.5,-.5)  ;

\draw [line width=1pt ] (-0.9,-1.5)--(-0.1,-1.5)  node [pos=.5, above] {$E_{1}$};
\draw [line width=1pt ] (-0.9,-2.5)--(-0.1,-2.5)  node [pos=.5, above] {$E_{3}$};
\draw [line width=1pt ] (-1,-1.6)--(-1,-2.4)  node [pos=.5, left] {$E_{2}$};
\draw [line width=1pt ] (0,-1.6)--(0,-2.4)  node [pos=.5, right] {$E_{4}$};
\draw [line width=1pt ] (0,-1.5)--(1,-1)  node [pos=.5, above] {$E_{5}$};

\path [pattern=north west lines, pattern  color=blue]   (-1.5,.5)--(-1,0)--(-1.5,-.5)--cycle;
\path [pattern=north east lines, pattern color=red]     (-1.5,-.5)--(-1,-1.5)--(-2.5,-.5)--cycle;
\path [pattern=north east lines, pattern color=blue]     (1,0)--(2,0)--(2,-1)--(1,-1)--cycle;

\end{tikzpicture}
\end{center}
\caption{A labeled hypergraph $H$.} \label{fig.b4minor}
\end{figure}

\begin{figure}[h]
\begin{center}
\begin{tikzpicture}

\draw  [shape=circle] (-3,-4) circle (.1);
\draw  [shape=circle] (-2.5,-3.5) circle (.1);
\draw  [shape=circle] (-1.5,-3.5) circle (.1);
\draw  [shape=circle] (-2.5,-4.5) circle (.1);
\draw  [shape=circle] (-1.5,-4.5) circle (.1);
\draw  [shape=circle] (-1,-4) circle (.1) ;
\draw  [shape=circle] (0,-4) circle (.1);
\draw  [shape=circle] (2,-4) circle (.1);
\draw  [shape=circle] (2,-5) circle (.1);
\draw  [shape=circle] (1,-4) circle (.1);
\draw  [shape=circle] (2.5,-4.5) circle (.1);
\draw [line width=1pt ] (-2.4,-3.5)--(-1.6,-3.5)  ;
\draw [line width=1pt ] (-3,-4)--(-2.5,-3.5)  ;
\draw [line width=1pt ] (-3,-4)--(-2.5,-4.5)  ;
\draw [line width=1pt ] (-0.9,-4)--(-0.1,-4)  ;
\draw [line width=1pt ] (0.1,-4)--(0.9,-4)  ;
\draw [line width=1pt ] (2,-4)--(2.5,-4.5)  ;
\draw [line width=1pt ] (2,-5)--(2.5,-4.5)  ;
\draw [line width=1pt ] (-2.4,-4.5)--(-1.6,-4.5)  ;
\path [pattern=north west lines, pattern  color=blue]   (-1.5,-3.5)--(-1,0-4)--(-1.5,-4.5)--cycle;
\path [pattern=north east lines, pattern color=blue]     (1,-4)--(2,-4)--(2,-5)--cycle;

\end{tikzpicture}
\end{center}
\caption{A minor $H'$ of $H$ in Figure \ref{fig.b4minor}.} \label{fig.minor}
\end{figure}
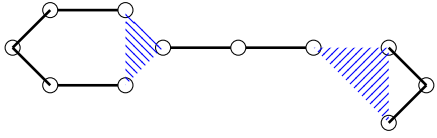

\begin{example} By removing edges $E_1, \dots, E_5$ and vertices belonging to these edges in the labeled hypergraph $H$ of Figure \ref{fig.b4minor} we obtain a minor $H'$ as in Figure \ref{fig.minor}. As we shall see from Definition \ref{def.exceptional} and Theorem \ref{thm.exceptional}, $\A[H'] \not= k[H']$. Thus, it follows from Theorem \ref{thm.subhypergraph} that $\A[H] \not= k[H]$.
\end{example}

%%%%%%%%%%%%%%%%%%%%%%%%%%%%%%%%%%%%%%%%%%%%%%%%%%%%%%%%%%%%%%%%%%%%%%%%%%%%%%

\section{Ehrhart and toric rings of labeled hypergraphs} \label{sec.notnormal}

In this section, we give a necessary and sufficient condition for the equality $\A[H] = k[H]$ in the case where the 1-skeleton of $H$ is connected and contains odd cycles. In particular, we identify a large class of labeled hypergraphs for which the toric ring $k[H]$ is normal. We shall also provide sufficient conditions for labeled hypergraphs so that $\A[H] \not= k[H]$ (or equivalently, necessary conditions for $\A[H] = k[H]$).

We start by considering the case where the 1-skeleton of $H$ is connected and contains odd cycles.

\begin{theorem} \label{thm.oneoddcycle}
Let $H$ be a separated labeled hypergraph. Assume that the 1-skeleton of $H$ is connected and contains odd cycles. Then $\A[H] = k[H]$ if and only if either of the following conditions is satisfied:
\begin{enumerate}
\item $H$ has an odd number of vertices;
\item $H$ contains an even dimensional edge.
\end{enumerate}
\end{theorem}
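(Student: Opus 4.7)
The plan is to apply the combinatorial criterion of Proposition \ref{thm.normalcondition} directly, by classifying the tuples $(c_i)_{i=1}^s$ with $0 \le c_i < 1$ and $\sum_i c_i \in \ZZ$ for which $\a = \sum_i c_i \a_i \in \ZZ^n$. The key restriction comes from the $1$-dimensional edges of $H$: if $\{u,v\}$ is such an edge with label $x_j$, then the $x_j$-coordinate of $\a$ equals $c_u + c_v$, and since $c_u, c_v \in [0,1)$ integrality forces $c_u + c_v \in \{0,1\}$.

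From this I would split $V$ into $Z = \{i : c_i = 0\}$ and $P = \{i : c_i > 0\}$. A $1$-dimensional edge joining $Z$ and $P$ would contribute a non-integer coordinate in $(0,1)$, so no such edge exists, and connectedness of the $1$-skeleton then forces $Z = V$ or $P = V$. The case $Z = V$ gives $\a = 0$, which is trivially decomposable. In the case $P = V$, every $1$-dimensional edge satisfies $c_u + c_v = 1$; walking around an odd cycle $v_1, E_1, v_2, \ldots, v_{2k+1}, E_{2k+1}, v_1$ in the $1$-skeleton alternates $c$ and $1-c$, forcing $c_{v_1} = 1 - c_{v_1}$ and hence $c_{v_1} = 1/2$, and propagating along paths in the connected $1$-skeleton then yields $c_i = 1/2$ for every $i \in V$.

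The theorem now falls out of this dichotomy. For sufficiency, if $|V|$ is odd then $\sum c_i = |V|/2 \notin \ZZ$, and if $H$ has an even-dimensional edge $E$ (i.e.\ $|E|$ odd) then the coordinate $\sum_{i \in E} c_i = |E|/2 \notin \ZZ$; either obstruction rules out the case $P = V$, leaving only $\a = 0$, so Proposition \ref{thm.normalcondition} holds vacuously and $\A[H] = k[H]$. For necessity, assume $|V|$ is even and every edge of $H$ has even cardinality; then $\a := \tfrac12 \sum_i \a_i$ is a lattice point with integer coefficient sum $|V|/2$. A hypothetical integer decomposition $\a = \sum_i d_i \a_i$ with $d_i \in \NN$ and $\sum_i d_i = |V|/2$ would force $d_u + d_v = 1$ on each $1$-dimensional edge, whence along an odd cycle $v_1, \ldots, v_{2k+1}$ one obtains $d_{v_1} = d_{v_3} = \cdots = d_{v_{2k+1}}$ together with $d_{v_{2k+1}} + d_{v_1} = 1$, i.e.\ $2 d_{v_1} = 1$, which is impossible in $\NN$. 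Thus $\A[H] \ne k[H]$.

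The main care point is checking that higher-dimensional edges do not spoil the propagation argument: they impose integrality constraints of the form $\sum_{i\in E} c_i \in \ZZ$, which are automatically consistent with all $c_i = 1/2$ when $|E|$ is even, and which precisely supply the second obstruction to the case $P = V$ when some $|E|$ is odd. One should also confirm that the separateness of $H$ and the connectedness of the $1$-skeleton together guarantee that every vertex of $H$ lies on some $1$-dimensional edge, so the propagation indeed reaches every $i \in V$.
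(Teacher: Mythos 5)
Your proof is correct and follows essentially the same route as the paper's: apply the criterion of Proposition~\ref{thm.normalcondition}, use the $1$-dimensional edges and an odd cycle to force $c_i = \tfrac12$ and propagate this over the connected $1$-skeleton, derive the two obstructions (odd vertex count, even-dimensional edge) for sufficiency, and run the parity argument around an odd cycle on the would-be integer coefficients $d_i$ for necessity. Your explicit treatment of the case where some $c_i = 0$ (the $Z/P$ split) is a small refinement the paper elides by assuming $0 < c_i < 1$, but it does not change the substance of the argument.
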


\begin{proof} Let $I_H = (x^{\a_1}, \dots, x^{\a_s})$. Consider an integral point $\a = \sum_{i=1}^s c_i \a_i \in \NN^n$ for $0 < c_i < 1$ and $\sum_{i=1}^s c_i \in \ZZ$. Let $l$ and $m$ be any two adjacent vertices in the 1-skeleton of $H$, and assume that $v$ is a label for the edge $\{l,m\}$. It can be seen that the power of $v$ in $x^\a$ is $c_l + c_m$. Thus, for $\a$ to be an integral point, we must have $c_l + c_m = 1$. Since the 1-skeleton of $H$ is connected and contains an odd cycle, by tracing around an odd cycle, we can conclude that $c_i = 1/2$ for all $i$.

If $H$ has an odd number of vertices (i.e., $s$ is an odd number), then $\sum_{i=1}^s c_i = s/2 \not\in \ZZ$, a contradiction. On the other hand, if $H$ contains an even dimensional edge $E$ and $w$ is a label of $E$ then the power of $w$ in $x^\a$ is $\sum_{j \in E}c_j = |E|/2 \not\in \ZZ$, also a contradiction. Hence, if either (1) or (2) is satisfied then no such integral point $\a$ exists, and so $\A[H] = k[H]$ by Proposition \ref{thm.normalcondition}.

Conversely, suppose that both (1) and (2) fails, i.e., $H$ has an even number of vertices and contains no even dimensional edges. Consider $\a = \sum_{i=1}^s \dfrac{1}{2}\a_i$. Since $s$ is even, $\sum_{i=1}^s \dfrac{1}{2} \in \ZZ$. Also, for any edge $E$ in $H$ and any label $w$ of $E$, the power of $w$ in $x^\a$ is $\sum_{j \in E} \dfrac{1}{2} = \dfrac{|E|}{2} \in \ZZ$. Thus, $\a \in \NN^n$. Now, assume that we can write $\a = \sum_{i=1}^s d_i\a_i$, where $d_i \in \NN$ (and $\sum_{i=1}^s d_i = \dfrac{s}{2}$). Let $C = (i_1, \dots, i_{2l+1})$ be an odd cycle in the 1-skeleton of $H$. Then since the power of any label of each edge of $C$ in $x^\a$ is 1, we must have $d_{i_j} = 0,$ or $1$. Since $C$ has an odd number of vertices, there must exist a $j$ such that $d_{i_j} = d_{i_{j+1}} = 1$. Take $u$ to be any label of the edge $\{i_j, i_{j+1}\}$ in $C$, then the power of $u$ in $x^\a$ is now $d_{i_j} + d_{i_{j+1}} = 2$, a contradiction. Hence, we cannot write $\a = \sum_{i=1}^s d_i\a_i$ for $d_i \in \NN$. By Proposition \ref{thm.normalcondition}, this implies that $\A[H] \not= k[H]$.
\end{proof}

\begin{corollary} Let $H$ be a separated labeled hypergraph whose 1-skeleton is connected and contains odd cycles. Assume that $H$ either has an odd number of vertices or contains an even dimensional edge. Then $k[H]$ is normal.
\end{corollary}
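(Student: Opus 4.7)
The plan is to derive this directly from Theorem \ref{thm.oneoddcycle} combined with the general fact recorded in Remark \ref{rmk.NP}. The hypotheses of the corollary are exactly the hypotheses of Theorem \ref{thm.oneoddcycle}: the 1-skeleton of $H$ is connected, contains odd cycles, and either (1) $H$ has an odd number of vertices, or (2) $H$ contains an even dimensional edge. Thus Theorem \ref{thm.oneoddcycle} immediately gives the equality $\A[H] = k[H]$.

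Next I would invoke Remark \ref{rmk.NP} (whose underlying fact is standard, see \cite{BH}), which states that the Ehrhart ring $\A[\P]$ of any integral convex polytope is always normal; equivalently, $\A[H]$ is normal for every separated labeled hypergraph $H$. Combining this with the equality $\A[H] = k[H]$ from the previous step, we conclude that $k[H]$ is normal.

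In short, the proof is a one-line chain: the hypotheses force $k[H] = \A[H]$ by Theorem \ref{thm.oneoddcycle}, and $\A[H]$ is automatically normal, so $k[H]$ is normal. There is no genuine obstacle here; the corollary is simply repackaging Theorem \ref{thm.oneoddcycle} together with the well-known normality of Ehrhart rings, and is meant to emphasize that the combinatorial conditions (1) and (2) identify a large class of labeled hypergraphs whose toric rings are normal, even though the stronger conclusion $\A[H] = k[H]$ already holds.
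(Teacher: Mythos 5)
Your proposal is correct and matches the paper's own proof exactly: both derive $\A[H] = k[H]$ from Theorem \ref{thm.oneoddcycle} and then conclude by the standard fact that the Ehrhart ring is always normal. Nothing further is needed.
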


\begin{proof} The result follows immediately from Theorem \ref{thm.oneoddcycle} noticing that the Ehrhart ring $\A[H]$ is always normal.
\end{proof}

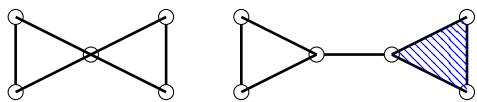
\begin{figure}[h]
\begin{center}
\begin{tikzpicture}

\draw  [shape=circle] (-1,0.5) circle (.1); %node [left] {$H_{1}$};
\draw  [shape=circle] (-1,-0.5) circle (.1);
\draw  [shape=circle] (0,0) circle (.1);
\draw  [shape=circle] (1,-.5) circle (.1);
\draw  [shape=circle] (1,.5) circle (.1);

\draw [line width=1pt ] (-1,.4)--(-1,-0.4)  ;
\draw [line width=1pt ] (-1,0.5)--(0,0)  ;
\draw [line width=1pt ] (-1,-0.5)--(0,0)  ;
\draw [line width=1pt ] (1,0.5)--(0,0)  ;
\draw [line width=1pt ] (1,-0.5)--(0,0)  ;
\draw [line width=1pt ] (1,-0.4)--(1,0.4)  ;

\draw  [shape=circle] (2,0.5) circle (.1); %node [left] {$H_{2}$};
\draw  [shape=circle] (2,-0.5) circle (.1);
\draw  [shape=circle] (3,0) circle (.1);
\draw  [shape=circle] (4,0) circle (.1);
\draw  [shape=circle] (5,-.5) circle (.1);
\draw  [shape=circle] (5,0.5) circle (.1);

\draw [line width=1pt ] (2,.4)--(2,-0.4)  ;
\draw [line width=1pt ] (2,0.5)--(3,0)  ;
\draw [line width=1pt ] (2,-0.5)--(3,0)  ;
\draw [line width=1pt ] (3.9,0)--(3.1,0)  ;
\draw [line width=1pt ] (5,0.5)--(4,0)  ;
\draw [line width=1pt ] (5,-0.5)--(4,0)  ;
\draw [line width=1pt ] (5,-0.4)--(5,0.4)  ;

\path [pattern=north west lines, pattern  color=blue]   (4,0)--(5,-0.5)--(5,0.5)--cycle;
\end{tikzpicture}
\end{center}
\caption{Hypergraphs with connected 1-skeleton that contains odd cycles where $k[H] = \A[H]$ is normal.} \label{fig.1skeleton}
\end{figure}

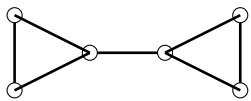
\begin{figure}[h]
\begin{center}
\begin{tikzpicture}

\draw  [shape=circle] (2,-1.5) circle (.1); %node [left] {$H_{3}$};
\draw  [shape=circle] (2,-2.5) circle (.1);
\draw  [shape=circle] (3,-2) circle (.1);
\draw  [shape=circle] (4,-2) circle (.1);
\draw  [shape=circle] (5,-2.5) circle (.1);
\draw  [shape=circle] (5,-1.5) circle (.1);

\draw [line width=1pt ] (2,-1.6)--(2,-2.4)  ;
\draw [line width=1pt ] (2,-1.5)--(3,-2)  ;
\draw [line width=1pt ] (2,-2.5)--(3,-2)  ;
\draw [line width=1pt ] (3.9,-2)--(3.1,-2)  ;
\draw [line width=1pt ] (5,-1.5)--(4,-2)  ;
\draw [line width=1pt ] (5,-2.5)--(4,-2)  ;
\draw [line width=1pt ] (5,-2.4)--(5,-1.6)  ;

%\path (-3,-3)--(-3,-3) node [pos=.5, right ] {$H_{1}$ and $H_{2}$ are normal but $H_{3}$ is not normal. };

\end{tikzpicture}
\end{center}
\caption{A hypergraph with connected 1-skeleton that contains odd cycles where $\A[H] \not= k[H]$.}\label{fig.1skeletonN}
\end{figure}

\begin{example} Hypergraphs in Figures \ref{fig.1skeleton} and \ref{fig.1skeletonN} all have connected 1-skeletons which contain odd cycles. By Theorem \ref{thm.oneoddcycle}, the toric rings of hypergraphs in Figure \ref{fig.1skeleton} are normal and the toric ring of the hypergraph in Figure \ref{fig.1skeletonN} is not normal.
\end{example}

Our next result, in the case where the 1-skeleton of $H$ is connected (but does not necessarily contain odd cycles), provides a general sufficient condition for labeled hypergraphs for which $\A[H] \not= k[H]$.

\begin{definition} \label{def.coloring}
A labeled hypergraph $H$ is said to be \emph{2-solvable modulo $p$}, for a prime number $p$, if there exists a 2-coloring of the 1-skeleton of $H$ (using two colors \emph{red} and \emph{blue}) with the following property: for any edge $E$ in $H$ (and for $E = V$) by letting $r_E$ and $b_E$ be the number of red and blue vertices in $E$ (or the total number of red and blue vertices in $H$ when $E = V$) we have $r_E - b_E = 0$ in $\ZZ_p$. The hypergraph $H$ is said to be \emph{2-solvable} if it is 2-solvable modulo $p$ for some prime number $p$.
\end{definition}

\begin{example} The hypergraph in Figure \ref{fig.bicolor} is 2-solvable modulo 3, and a 2-coloring of its 1-skeleton is given.

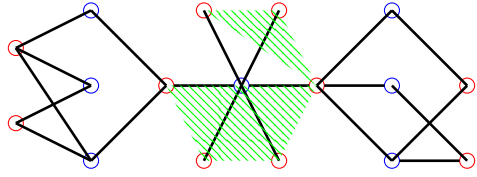
\begin{figure}[h]
\begin{center}
\begin{tikzpicture}

\draw  [shape=circle, color=red] (-3,-0.5) circle (.1);
\draw  [shape=circle, color=red] (-3,0.5) circle (.1);
\draw  [shape=circle, color=blue] (-2,1) circle (.1);
\draw  [shape=circle, color=blue] (-2,0) circle (.1);
\draw  [shape=circle, color=blue] (-2,-1) circle (.1);
\draw  [shape=circle, color=red] (-1,0) circle (.1);
\draw  [shape=circle, color=red] (-0.5,-1) circle (.1);
\draw  [shape=circle, color=red] (-0.5,1) circle (.1);
\draw  [shape=circle, color=blue] (0,0) circle (.1);
\draw  [shape=circle, color=red] (0.5,-1) circle (.1);
\draw  [shape=circle, color=red] (0.5,1) circle (.1);
\draw  [shape=circle, color=red] (1,0) circle (.1);
\draw  [shape=circle, color=blue] (2,0) circle (.1);
\draw  [shape=circle, color=blue] (2,1) circle (.1);
\draw  [shape=circle, color=blue] (2,-1) circle (.1);
\draw  [shape=circle, color=red] (3,0) circle (.1);
\draw  [shape=circle, color=red] (3,-1) circle (.1);

\draw [line width=1pt ] (-3,.5)--(-2,-1)  ;
\draw [line width=1pt ] (-3,0.5)--(-2,0)  ;
\draw [line width=1pt ] (-3,0.5)--(-2,1)  ;
\draw [line width=1pt ] (-3,-0.5)--(-2,0)  ;
\draw [line width=1pt ] (-3,-0.5)--(-2,-1)  ;
\draw [line width=1pt ] (-2,1)--(-1,0)  ;
\draw [line width=1pt ] (-2,-1)--(-1,0)  ;
\draw [line width=1pt ] (-0.9,0)--(-0.1,0)  ;
\draw [line width=1pt ] (-0.5,1)--(0,0)  ;
\draw [line width=1pt ] (-0.5,-1)--(0,0)  ;
\draw [line width=1pt ] (0.5,1)--(0,0)  ;
\draw [line width=1pt ] (0.5,-1)--(0,0)  ;
\draw [line width=1pt ] (0.1,0)--(0.9,0)  ;
\draw [line width=1pt ] (1.9,0)--(1.1,0)  ;
\draw [line width=1pt ] (2,1)--(1,0)  ;
\draw [line width=1pt ] (2,-1)--(1,0)  ;
\draw [line width=1pt ] (2.9,-1)--(2.1,-1)  ;
\draw [line width=1pt ] (3,0)--(2,1)  ;
\draw [line width=1pt ] (3,0)--(2,-1)  ;
\draw [line width=1pt ] (3,-1)--(2,0)  ;

\path [pattern=north west lines, pattern  color=green]   (-0.5,1)--(0.5,1)--(1,0)--cycle;
\path [pattern=north west lines, pattern  color=green]   (-1,0)--(1,0)--(0.5,-1)--(-0.5,-1)--cycle;

\end{tikzpicture}
\end{center}
\caption{A hypergraph 2-solvable modulo 3.}\label{fig.bicolor}
\end{figure}
\end{example}

\begin{theorem} \label{thm.bi-color}
Let $H$ be a separated labeled hypergraph with connected 1-skeleton. Suppose that $H$ is 2-solvable and contains a simple face in which the number of red and blue vertices are different. Then $\A[H] \not= k[H]$.
\end{theorem}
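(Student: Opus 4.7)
The plan is to invoke Proposition \ref{thm.normalcondition} by exhibiting a fractional lattice point $\a$ whose coefficients sum to an integer but which admits no integer decomposition with the same coefficient sum. Let $p$ be a prime modulo which $H$ is 2-solvable, fix a proper 2-coloring of the 1-skeleton in red and blue witnessing this, and write $I_H = (x^{\a_1}, \dots, x^{\a_s})$ with $\a_i$ corresponding to vertex $i$. I would set $c_i = 1/p$ if $i$ is red and $c_i = (p-1)/p$ if $i$ is blue, and let $\a = \sum_i c_i \a_i$. For any edge $E$ of $H$ and any label $v$ of $E$, the $v$-coordinate of $\a$ is $\sum_{i \in E} c_i = (r_E - b_E)/p + b_E$, which lies in $\ZZ$ since $r_E \equiv b_E \pmod p$; taking $E = V$ also gives $\sum_i c_i \in \ZZ$. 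So $\a \in \NN^n$ and we are in the setting of Proposition \ref{thm.normalcondition}.

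Next I would suppose for contradiction that $\a = \sum_i d_i \a_i$ with $d_i \in \NN$ and $\sum d_i = \sum c_i$. Comparing $v$-coordinates for each edge $E$ forces $\sum_{i \in E} d_i = \sum_{i \in E} c_i$. Applied to a 1-dim edge $\{l, m\}$: properness of the 2-coloring makes $l$ and $m$ differently colored, so $c_l + c_m = 1$, hence $d_l + d_m = 1$ in nonnegative integers and $d_l, d_m \in \{0, 1\}$. Since the 1-skeleton is connected and every vertex therefore lies on some 1-dim edge, this gives $d_i \in \{0, 1\}$ globally, and $d_l + d_m = 1$ on every 1-dim edge says that $i \mapsto d_i$ is itself a proper 2-coloring of the 1-skeleton. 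A connected bipartite graph has a unique bipartition up to swap, so either $d_i = 1$ precisely on the red vertices, or $d_i = 1$ precisely on the blue ones.

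Finally, I would evaluate at the simple face $E_0$ with $r_{E_0} \neq b_{E_0}$: in the two cases, $\sum_{i \in E_0} d_i = r_{E_0}$ or $b_{E_0}$, but this sum must also equal $\sum_{i \in E_0} c_i = (r_{E_0} - b_{E_0})/p + b_{E_0}$. Both resulting equations simplify (using $p \geq 2$) to $r_{E_0} = b_{E_0}$, contradicting the hypothesis. Thus no integer decomposition of $\a$ exists, and Proposition \ref{thm.normalcondition} delivers $\A[H] \neq k[H]$. I expect the main obstacle to be pinning down the right fractional coefficients at the outset: one needs exactly the $1/p$ and $(p-1)/p$ weights dictated by 2-solvability so that the mod-$p$ congruence $r_E \equiv b_E$ makes $\a$ integral, and then the rigidity of the connected bipartite 1-skeleton must be used to reduce any hypothetical integer decomposition to a color-symmetric $\{0,1\}$-pattern whose value on $E_0$ inevitably disagrees with $\sum_{i \in E_0} c_i$ precisely because of the red/blue imbalance on $E_0$.
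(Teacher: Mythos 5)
Your proposal is correct and follows essentially the same route as the paper's own proof: the same fractional point $\a$ with weights $\frac{1}{p}$ and $\frac{p-1}{p}$, the same use of $1$-dimensional edges and connectedness to force any integral decomposition into a $\{0,1\}$-pattern matching one of the two color classes, and the same contradiction at the unbalanced simple face. Your explicit final arithmetic showing $(r_{E_0}-b_{E_0})(1-\frac{1}{p})=0$ is just a spelled-out version of the paper's remark that $\frac{r'-b'}{p}+b'$ differs from both $r'$ and $b'$.
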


\begin{proof}
As before, let $I_H = (x^{\a_1}, \dots, x^{\a_s})$. Let $p$ be a prime number such that $H$ is 2-solvable modulo $p$. Let $r$ and $b$ be the number of red and blue vertices in $H$. Notice that since the 1-skeleton of $H$ forms a connected graph, its 2-coloring is uniquely determined by the color of any vertex.

Let
$$c_i = \left\{ \begin{array}{lll} \dfrac{1}{p} & \text{if} & i \text{ is red} \\ \dfrac{p-1}{p} & \text{if} & i \text{ is blue} \end{array} \right.$$
and let $\a = \sum_{i=1}^s c_i \a_i.$ Since $p \big| r-b$,
$$\sum_{i=1}^s c_i = \dfrac{r}{p} + \dfrac{(p-1)b}{p} = \dfrac{r-b}{p} + b \in \ZZ.$$
Also, for any variable $v$, let $E$ be the edge having $v$ as a label, and let $r_E$ and $b_E$ be the number of red and blue vertices in $E$. Then, the power of $v$ in $x^{\a}$ is
$$\dfrac{r_E}{p} + \dfrac{(p-1)b_E}{p} = \dfrac{r_E - b_E}{p} + b_E \in \NN.$$
Thus, $\a \in \NN^n$.

To prove $\A[H] \not= k[H]$, in light of Proposition \ref{thm.normalcondition}, we will show that $\a$ cannot be written as a nonnegative integral combination of the $\a_i$'s. Suppose, by contradiction, that $\a = \sum_{i=1}^s d_i \a_i$, where $d_i \in \NN$ for all $i$. Observe that for any edge $F = \{l,m\}$ in the 1-skeleton of $H$, the colors of $l$ and $m$ are different. Thus, for any label $w$ of $F$, the power of $w$ in $x^\a$ is exactly $\dfrac{1}{p} + \dfrac{p-1}{p} = 1$. Hence, the coefficients $d_l$ and $d_m$ must be 0 and 1 (or 1 and 0). This, together with the fact that the 2-coloring in $H$ is uniquely determined by the color of any vertex, implies that among the new coefficients $d_i$'s either red vertices have coefficients 1 and blue vertices have coefficients 0, or vice-versa.

Let $G$ be a simple edge in $H$ such that its number of red and blue vertices ($r'$ and $b'$, respectively) are different. Consider any label $u$ of $G$. It follows from our observation above that the power of $u$ in $x^{\a}$ is either $r'$ or $b'$. However, by the definition of $\a$, the power of $u$ is $$\dfrac{r'}{p} + \dfrac{(p-1)b'}{p} = \dfrac{r'-b'}{p} + b',$$
which is different from both $r'$ and $b'$. Thus, we arrive at a contradiction.
\end{proof}

In the case where the 1-skeleton of $H$ is not necessarily connected, our next result is based on Theorem \ref{thm.subhypergraph}. This theorem allows us to seek for ``minimal structure'' that obstruct the equality between Ehrhart and toric rings. Inspired by the characterization for graphs of \cite{HO, SVV}, we consider a configuration of 2 odd cycles in our labeled hypergraphs. Our next result identifies such a configuration that prevents the equality $\A[H] = k[H]$.

\begin{definition} \label{def.exceptional}
Let $H$ be a separated labeled hypergraph. A pair of two odd cycles $C_1$ and $C_2$ in $H$ is called an \emph{exceptional} pair if the following conditions are satisfied:
\begin{enumerate}
\item $C_1$ and $C_2$ do not share any vertices nor edges;
\item for $i=1,2$, all but one edges of $C_i$ are 1-dimensional and the remaining edge is a simple edge;
\item the two (possibly) higher dimensional edges of $C_1$ and $C_2$ are connected by at least an edge that does not contain any vertices of $C_1$ and $C_2$;
\item $H$ does not have any edge that contains an odd number of vertices from $C_1$ and $C_2$.
\end{enumerate}
\end{definition}

\begin{example} In Figure \ref{fig.bowtie} we have a bow-tie in the sense of \cite{SVV} and the labeled hypergraph corresponding to its edge ideal. This labeled hypergraph has a disconnected 1-skeleton, but it consists of an exceptional pair of odd cycles.
\begin{figure}[h]
\begin{center}
\begin{tikzpicture}

\shade [shading=ball, ball color=black]  (1,0.5) circle (.1); % node [left] {$H$};
\shade [shading=ball, ball color=black]  (1,-0.5) circle (.1);
\shade [shading=ball, ball color=black]   (2,0) circle (.1);
\shade [shading=ball, ball color=black]   (3,0) circle (.1);
\shade [shading=ball, ball color=black]  (4,0) circle (.1);
\shade [shading=ball, ball color=black]   (5,-.5) circle (.1);
\shade [shading=ball, ball color=black]   (5,0.5) circle (.1);

\draw [line width=1pt ] (1,.5)--(1,-0.5)  ;
\draw [line width=1pt ] (1,0.5)--(2,0)  ;
\draw [line width=1pt ] (1,-0.5)--(2,0)  ;
\draw [line width=1pt ] (3,0)--(2,0)  ;
\draw [line width=1pt ] (3,0)--(4,0)  ;
\draw [line width=1pt ] (5,0.5)--(4,0)  ;
\draw [line width=1pt ] (5,-0.5)--(4,0)  ;
\draw [line width=1pt ] (5,-0.5)--(5,0.5)  ;

\draw  [shape=circle] (7,0) circle (.1); % node [left] {$H'$};
\draw  [shape=circle] (8,0.5) circle (.1) ;
\draw  [shape=circle] (8,-0.5) circle (.1);
\draw  [shape=circle] (9,0) circle (.1);
\draw  [shape=circle] (10,0) circle (.1);
\draw  [shape=circle] (11,-.5) circle (.1);
\draw  [shape=circle] (11,0.5) circle (.1);
\draw  [shape=circle] (12,0) circle (.1);

\draw [line width=1pt ] (9.1,0)--(9.9,0)  ;
\draw [line width=1pt ] (7,0)--(8,.5)  ;
\draw [line width=1pt ] (7,0)--(8,-.5)  ;
\draw [line width=1pt ] (11,-.5)--(12,0)  ;
\draw [line width=1pt ] (11,.5)--(12,0)  ;

\path [pattern=north west lines, pattern  color=blue]   (8,0.5)--(8,-0.5)--(9,0)--cycle;
\path [pattern=north west lines, pattern  color=blue]   (11,0.5)--(11,-0.5)--(10,0)--cycle;
%\path (2,-1)--(2,-1) node [pos=.5, right ] {$H$ is the bow-tie and $H'$ is the hypergraph corresponding to it. };

\end{tikzpicture}
\end{center}
\caption{A blow-tie and its corresponding exceptional pair of odd cycles in a labeled hypergraph.} \label{fig.bowtie}
\end{figure}
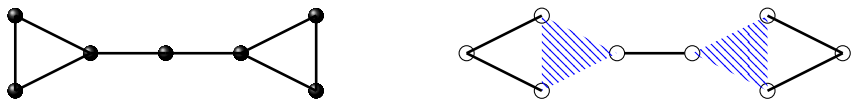
\end{example}
%Notice that when $I_H$ is generated in degree 2, our exceptional pairs of odd cycles correspond to the \emph{bow-ties} defined in \cite{SVV}.

\begin{theorem} \label{thm.exceptional}
Let $H$ be a separated labeled hypergraph. If $H$ contains an exceptional pair of odd cycles then $\A[H] \not= k[H]$.
\end{theorem}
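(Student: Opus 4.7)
My plan is to apply Proposition~\ref{thm.normalcondition}: I will exhibit an integral point $\a = \sum_v c_v \a_v$ with each $c_v \in [0,1)$ and $\sum_v c_v \in \ZZ_{>0}$, and then show that $\a$ admits no expression as a nonnegative integer combination of the $\a_v$'s with the same coefficient sum. Writing $C_1 = v_1,\ldots,v_{2k+1}$ and $C_2 = w_1,\ldots,w_{2l+1}$, I let $E_1, E_2$ be the simple higher-dimensional cycle-edges from condition~(2) (so that by specialness $E_1 \cap C_1 = \{v_1, v_{2k+1}\}$ and $E_2 \cap C_2 = \{w_1, w_{2l+1}\}$), and let $F$ denote the connecting edge from condition~(3). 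The candidate is $c_v = 1/2$ if $v \in C_1 \cup C_2$ and $c_v = 0$ otherwise.

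Since each $|C_i|$ is odd, $\sum_v c_v = (|C_1|+|C_2|)/2$ is a positive integer. To verify $\a \in \NN^n$ I work edge-by-edge: for every label $y$ of an edge $E_y$ of $H$, the $y$-coordinate of $\a$ equals $|E_y \cap (C_1 \cup C_2)|/2$, which is a nonnegative integer by condition~(4). Specifically, the coordinate of each 1-dimensional cycle-edge is~$1$, the coordinate of $E_1$ is $1 + |E_1 \cap C_2|/2$ (and symmetrically for $E_2$), and the coordinate of $F$ is~$0$ since $F \cap (C_1 \cup C_2) = \emptyset$.

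Now I assume, for contradiction, that $\a = \sum_v d_v \a_v$ with $d_v \in \NN$ and $\sum_v d_v = \sum_v c_v$. From each 1-dimensional cycle-edge $\{v_i, v_{i+1}\}$ of $C_1$ I get $d_{v_i} + d_{v_{i+1}} = 1$; these $2k$ equations form a path from $v_1$ to $v_{2k+1}$ along $C_1$ (with the remaining cycle-edge $E_1$ closing the loop), so the values $d_{v_i}$ alternate and I conclude $d_{v_1} = d_{v_{2k+1}} = \alpha$ for some $\alpha \in \{0,1\}$. The analogous argument in $C_2$ yields $d_{w_1} = d_{w_{2l+1}} = \beta \in \{0,1\}$. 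The vanishing of the $F$-coordinate gives $\sum_{v \in F} d_v = 0$, hence $d_v = 0$ for every $v \in F$, which in particular kills the $d$-values on $F \cap E_1$ and $F \cap E_2$.

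Combining these constraints, the $E_1$-equation reads
\[ 2\alpha + \sum_{v \in E_1 \cap C_2} d_v + \sum_{v \in E_1 \setminus (C_1 \cup C_2)} d_v = 1 + |E_1 \cap C_2|/2, \]
and there is a symmetric identity for $E_2$. In the cleanest case, where $E_1 \cap C_2 = \emptyset = E_2 \cap C_1$ and $F$ contains all of $(E_1 \cup E_2) \setminus (C_1 \cup C_2)$ (as in the bow-tie example of Figure~\ref{fig.bowtie}), both sums on the left vanish and I obtain $2\alpha = 1$, the required contradiction. I expect the main obstacle to be the general configuration, where $E_1$ may share vertices with $C_2$ or where $F$ fails to absorb all of $E_1 \setminus C_1$: the cross-contribution $\sum_{v \in E_1 \cap C_2} d_v = q + (p-q)\beta$ (with $p, q$ counting the vertices of $E_1 \cap C_2$ of each parity along $C_2$, and $p+q = |E_1 \cap C_2|$) must be handled jointly with the analogous $E_2$-term, using condition~(4) and the alternating structure on both cycles to show that the offending parity $2\alpha \equiv 1$ still surfaces. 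If the direct bookkeeping proves too delicate, my fallback is Theorem~\ref{thm.subhypergraph}: I would reduce to a suitable minor of $H$ in which the exceptional pair takes its cleanest form, and then invoke the argument above.
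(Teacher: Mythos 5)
Your argument is, step for step, the paper's argument: the same test point ($c_v=\tfrac12$ on the cycle vertices, $0$ elsewhere), the same verification of integrality via condition~(4), the same alternation of the $d_v$ along the $1$-dimensional cycle-edges forcing $d_{v_1}=d_{v_{2k+1}}=\alpha\in\{0,1\}$, the same use of the connecting edge to annihilate the $d$-values on $E_i\setminus(C_1\cup C_2)$, and the same parity clash $2\alpha=1$ at the simple edge. Moreover, the ``cleanest case'' you work out in full --- $E_1\cap C_2=\emptyset=E_2\cap C_1$ and the connecting edge absorbing all of $(E_1\cup E_2)\setminus(C_1\cup C_2)$ --- is essentially the only case the paper's proof addresses: the authors tacitly take the simple edge to meet the cycles only in its two endpoints on $C_1$, and assert that every vertex of $G_1$ outside the cycles receives coefficient $0$ from the connecting edge, which are exactly the two hypotheses you isolate as unresolved. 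So your write-up is no less complete than the published one, and more candid about where the bookkeeping is being waved at. Two caveats on your loose ends. First, the appeal to ``specialness'' for $E_1\cap C_1=\{v_1,v_{2k+1}\}$ is not licensed by Definition~\ref{def.exceptional} (the cycles are not assumed special); condition~(2) does force every $1$-dimensional cycle-edge to meet the cycle in exactly two vertices, so only the simple edge is at issue --- again the same silent assumption the paper makes. Second, the fallback via Theorem~\ref{thm.subhypergraph} is unlikely to close the general configuration: a minor is formed by deleting an edge together with \emph{all} of its vertices, so any deletion aimed at the offending vertices (say those of $E_1\cap C_2$) destroys the cycles themselves, and no contraction-type operation is available that would put the exceptional pair into its clean form while preserving it. If you want a genuinely complete argument for the general case, the cross-term analysis you sketch (tracking the parities of $d$ on $E_1\cap C_2$ jointly with the $E_2$-equation) is the right place to work, not the minor reduction.
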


\begin{proof} Let $I_{H} = (x^{\a_1}, \dots, x^{\a_s})$ and assume that $(C_1,C_2)$ is an exceptional pair of odd cycles in $H$. Without loss of generality, suppose that the vertices of $C_1$ and $C_2$ are $\{1, \dots, 2p-1\}$ and $\{2p, \dots, 2q\}$ ($q > p$). Define
$$c_i = \left\{\begin{array}{ll} \dfrac{1}{2} & \text{if $i$ is a vertex in } C_1 \text{ or } C_2 \\
0 & \text{otherwise,}\end{array} \right.$$
and consider $\a = \sum_{i=1}^s c_i \a_i.$

It is easy to see that $\sum_{i=1}^s c_i = \sum_{i=1}^{2q} \dfrac{1}{2} = q \in \ZZ$. Consider an edge $E$ in $H$ and a label $w$ of $E$. If $E$ is an edge of the cycles $C_1$ and $C_2$ then the power of $w$ in $x^{\a}$ is 1. If $E$ is not an edge of $C_1$ and $C_2$ then $E$ contain an even number of vertices from $C_1$ and $C_2$, and the power of $w$ in $x^\a$ is still an integer. Thus, $\a \in \NN^n$.

Suppose now that $\a$ can be written as $\a = \sum_{i=1}^s d_i \a_i$ for $d_i \in \NN$. Consider an edge $\{l,m\}$ on the odd cycles and let $v$ be any label of this edge. Clearly, the power of $v$ in $x^\a$ is 1, and so $d_l$ and $d_m$ are exactly $1$ and $0$ (or $0$ and $1$). It follows that the coefficients $d_i$'s of the vertices on each odd cycles are $0$ and $1$ alternatively, except possibly at the simple higher dimensional edge.

Let $G_1$ and $G_2$ be the higher dimensional edges of $C_1$ and $C_2$. Since $G_1$ and $G_2$ are connected by at least an edge that does not contain any vertices from $C_1$ and $C_2$, the power of any label of edges connecting $G_1$ and $G_2$ in $x^\a$ must be 0. This implies that among the new coefficients $d_i$'s the coefficients of vertices in $G_1$ and $G_2$ that do not belong to $C_1$ and $C_2$ are still 0. Now, suppose that $l$ and $m$ are vertices on $C_1$ that belong to $G_1$. It then follows (by going around the cycle) that $d_l$ and $d_m$ are either both 0 or both 2. In particular, the power of any label $u$ of $F$ in $x^\a$ is either 0 or 2. This is a contradiction to the way $c_i$'s and $\a$ were chosen. Hence, $\A[H] \not= k[H]$ by Proposition \ref{thm.normalcondition}.
\end{proof}

\begin{figure}[h]
\begin{center}
\begin{tikzpicture}

\draw  [shape=circle] (1,0) circle (.1) ;
\draw  [shape=circle] (2,0.5) circle (.1) ;
\draw  [shape=circle] (2,-0.5) circle (.1);
\draw  [shape=circle] (3,0) circle (.1);

\draw  [shape=circle] (4,0) circle (.1);
\draw  [shape=circle] (5,-.5) circle (.1);
\draw  [shape=circle] (5,0.5) circle (.1);
\draw  [shape=circle] (6,1) circle (.1);
\draw  [shape=circle] (6,-1) circle (.1);
\draw  [shape=circle] (7,0.5) circle (.1);
\draw  [shape=circle] (7,-0.5) circle (.1);
\draw  [shape=circle] (8,0) circle (.1);

\draw [line width=1pt ] (3.1,0)--(3.9,0)  ;
\draw [line width=1pt ] (1,0)--(2,.5)  ;
\draw [line width=1pt ] (1,0)--(2,-.5)  ;
\draw [line width=1pt ] (6,-1)--(5,-.5)  ;
\draw [line width=1pt ] (6,1)--(5,.5)  ;
\draw [line width=1pt ] (6,1)--(7,.5)  ;
\draw [line width=1pt ] (6,-1)--(7,-.5)  ;
\draw [line width=1pt ] (8,0)--(7,.5)  ;
\draw [line width=1pt ] (8,0)--(7,-.5)  ;

\draw (4,-1.5) node [left] {$H_1$};

\path [pattern=north west lines, pattern  color=blue]   (2,0.5)--(2,-0.5)--(3,0)--cycle;
\path [pattern=north west lines, pattern  color=red]   (5,0.5)--(7,0.5)--(7,-0.5)--(6,-1)--cycle;

\path [pattern=north west lines, pattern  color=blue]   (5,0.5)--(5,-0.5)--(4,0)--cycle;

%%%%%%%%%%%%%%%%%%%%%

\draw  [shape=circle] (9,0) circle (.1) ;
\draw  [shape=circle] (10,0.5) circle (.1) ;
\draw  [shape=circle] (10,-0.5) circle (.1);
\draw  [shape=circle] (11,0) circle (.1);

\draw  [shape=circle] (12,0) circle (.1);
\draw  [shape=circle] (13,-.5) circle (.1);
\draw  [shape=circle] (13,0.5) circle (.1);
\draw  [shape=circle] (14,1) circle (.1);
\draw  [shape=circle] (14,-1) circle (.1);
\draw  [shape=circle] (15,0.5) circle (.1);
\draw  [shape=circle] (15,-0.5) circle (.1);
\draw  [shape=circle] (16,0) circle (.1);

\draw [line width=1pt ] (11.1,0)--(11.9,0)  ;
\draw [line width=1pt ] (9,0)--(10,.5)  ;
\draw [line width=1pt ] (9,0)--(10,-.5)  ;
\draw [line width=1pt ] (14,-1)--(13,-.5)  ;
\draw [line width=1pt ] (14,1)--(13,.5)  ;
\draw [line width=1pt ] (14,1)--(15,.5)  ;
\draw [line width=1pt ] (14,-1)--(15,-.5)  ;
\draw [line width=1pt ] (16,0)--(15,.5)  ;
\draw [line width=1pt ] (16,0)--(15,-.5)  ;

\draw (12,-1.5) node [left] {$H_2$};

\path [pattern=north west lines, pattern  color=blue]   (10,0.5)--(10,-0.5)--(11,0)--cycle;
\path [pattern=north west lines, pattern  color=red]   (14,1)--(15,0.5)--(15,-0.5)--(14,-1)--cycle;

\path [pattern=north west lines, pattern  color=blue]   (13,0.5)--(13,-0.5)--(12,0)--cycle;

\end{tikzpicture}
\end{center}
\caption{Hypergraphs that contain exceptional pairs of odd cycles.} \label{fig.EPairs}
\end{figure}
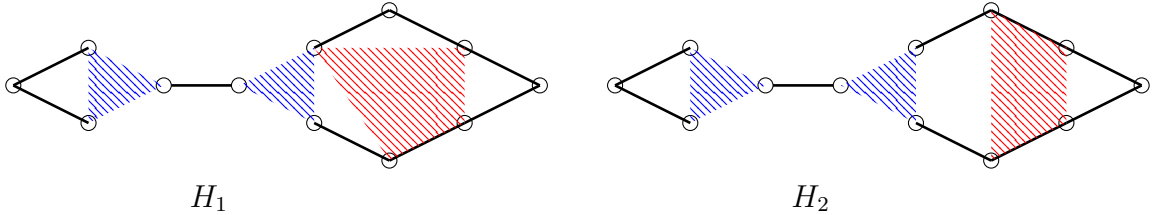

\begin{remark} Unlike the discussion for graphs in \cite{HO}, with hypergraphs, even when $\A[H] \not= k[H]$, the normality of the toric ring $k[H]$ is still a subtle question. For example, in Figure \ref{fig.EPairs} we give two labeled hypergraphs that look very similar and both contain exceptional pairs of odd cycles. The squarefree monomial ideals corresponding to these hypergraphs are:
\begin{align*}
I_{H_1} = (x_1x_2, & x_1x_3, x_2x_3, x_3x_6, x_6x_7, x_7x_{13}, x_{12}x_{13}x_{14}, x_{11}x_{12}x_{14}, \\
& x_{10}x_{11}, x_9x_{10}x_{14}, x_8x_9,x_7x_8x_{14}), \text{ and }
\end{align*}
\begin{align*}
I_{H_2} = (x_1x_2, & x_1x_3, x_2x_3, x_3x_6, x_6x_7, x_7x_{13}, x_{12}x_{13}x_{14}, x_{11}x_{12}x_{14}, \\
& x_{10}x_{11}, x_9x_{10}x_{14}, x_8x_9x_{14}, x_7x_8).
\end{align*}
Computation shows that $k[H_1]$ is normal while $k[H_2]$ is not normal.
\end{remark}

%%%%%%%%%%%%%%%%%%%%%%%%%%%%%%%%%%%%%%%%%%%%%%%%%%%%%%%%%%%%%%%%%%%%%%%%%%%%%


\begin{thebibliography}{9999}
\bibitem{ABCC} D. Applegate, R. Bixby, V. Chv\'atal and W. Cook, On the solution of traveling
salesman problems. {\it International Congress of Mathematics} (Berlin 1998), Documenta
Math., Extra Volume ICM 1998, Vol. III, 645-656.
\bibitem{BGT} W. Bruns, J. Gubeladze and N.V. Trung, Normal polytopes, triangulations, and Koszul algebras. J. Reine Angew. Math. {\bf 485} (1997), 123-160.
\bibitem{BI} W. Bruns and B. Ichim, {\sc Normaliz 2.0}, Computing normalizations of affine semigroups 2008. Available from \verb|http://www.math.uos.de/normaliz|
\bibitem{BM} F. Barahona and A. R. Mahjoub, On the cut polytope. Math. Programming {\bf 36} (1986), 157-173.
\bibitem{Berge} C. Berge, Hypergraphs: combinatorics of finite sets. North-Holland Mathematical Library {\bf 45}. Elsevier Science Publishers, 1989.
\bibitem{BH} W. Bruns and J. Herzog, Cohen-Macaulay ring. Cambridge studies in advanced mathematics {\bf 39}. Cambridge University Press, 1993.
\bibitem{CHHH} D. Cox, C. Haase, T. Hibi and A. Higashitani, Integer decomposition property of dilated polytopes. Preprint, arXiv:1211.5755.
\bibitem{CLS} D. Cox, J. Little, and H. Schenck, Toric varieties. Graduate Studies in Mathematics, 124. American Mathematical Society, Providence, RI, 2011. xxiv+841 pp.
\bibitem{DL} M. M. Deza and M. Laurent, Geometry of Cuts and Metrics. {\it Algorithms and Combinatorics} {\bf 15}. Springer-Verlag, Berlin, 1997.
\bibitem{DRV} L.A. Dupont, C. Renter\'ia-M\'arquez, and R.H. Villarreal, Systems with the integer rounding property in normal monomial subrings. An. Acad. Brasil. Ci\^enc. {\bf 82} (2010),  no. 4, 801-811. % Can delete this!
\bibitem{DV} L.A. Dupont and R.H. Villarreal, Algebraic and combinatorial properties of ideals and algebras of uniform clutters of TDI systems. J. Comb. Optim. {\bf 21} (2011),  no. 3, 269-292.
\bibitem{E} D. Eisenbud, Commutative Algebra: with a View Toward Algebraic Geometry. Springer-Verlag, New York, 1995.
\bibitem{FHM} C.A. Francisco, H.T. H\`a, and J. Mermin, Powers of square-free monomial ideals and combinatorics.  {\it Commutative algebra}, 373-392, Springer, New York, 2013.
\bibitem{HM} H.T. H\`a and S. Morey, Embedded associated primes of powers of square-free monomial ideals. J. Pure Appl. Alg. {\bf 214} (2010), no. 4, 301-308.
\bibitem{GRV} I. Gitler, E. Reyes and R.H. Villarreal, Blowup algebras of squarefree monomial ideals and some links to combinatorial optimization problems. Rocky Mountain J. Math. {\bf 39} (2009), no. 1, 71-102.
\bibitem{GVV} I. Gitler, C.E. Valencia, and R.H. Villarreal, A note on Rees algebras and the MFMC property. Beitr\"age Algebra Geom. {\bf 48} (2007),  no. 1, 141-150.
\bibitem{GP} E. Gross and S. Petrovi\'c, A combinatorial degree bound for toric ideals of hypergraphs. Internat. J. Algebra Comput. {\bf 23} (2013), no. 06, 1503-1520.
\bibitem{GrPa} M. Gr\"otschel and M. Padberg, Polyhedral Theory/Polyhedral Computations. {\it The Traveling Salesman Problem}. E.L. Lawler, J.K. Lenstra, A.H.G. Rinnoy Kan, D.B. Schmoys (eds.), 251-360, Wiley 1988.
\bibitem{HHTZ} J. Herzog, T. Hibi, N.V. Trung, and X. Zheng, Standard graded vertex cover algebras, cycles and leaves. Trans. A.M.S. {\bf 360} (2008), No. 12, 6231-6249.
\bibitem{HO} T. Hibi and H. Ohsugi, Normal polytopes arising from finite graphs. J. Algebra, {\bf 207} (1998), 409-426.
\bibitem{KTY} K. Kimura, N. Terai, and K. Yoshida, Arithmetical rank of squarefree monomial ideals
of small arithmetic degree. J. Alg. Comb. {\bf 29} (2009), 389-404.
\bibitem{LM} K.-N. Lin and J. McCullough, Hypergraphs and the regularity of square-free monomial ideals.  Internat. J. Algebra Comput. 23 (2013), no. 7, 1573-1590.
\bibitem{M2} D.R. Grayson, M.E. Stillman, Macaulay 2, a software system for research in algebraic geometry.
\newblock \verb|http://www.math.uiuc.edu/Macaulay2/|.
\bibitem{MOV} J. Mart\'inez-Bernal, E. O'Shea and R.H. Villarreal, Ehrhart clutters: regularity and max-flow-min-cut. Electron. J. Combin. {\bf 17} (2010), no. 1, R52.
\bibitem{OK} A.B. O'Keefe,  Cohen-Macaulay toric rings arising from finite graphs. Thesis (Ph.D.) Tulane University School of Science and Engineering. 2012. 80 pp. ISBN: 978-1267-39449-1, ProQuest LLC.
\bibitem{PS} S. Petrovi\'c and D. Stasi, Toric algebra of hypergraphs.  J. Algebraic Combin. 39 (2014), no. 1, 187-208.
\bibitem{RSV} C. Renter\'ia-M\'arquez, A. Simis, and R.H. Villarreal, Algebraic methods for parameterized codes and invariants of vanishing ideals over finite fields. Finite Fields Appl. {\bf 17} (2011),  no. 1, 81-104. % Can delete either of this or the next (from introduction)
\bibitem{SVPV} E. Sarmiento, M. Vaz Pinto, and R.H. Villarreal,  The minimum distance of parameterized codes on projective tori. Appl. Algebra Engrg. Comm. Comput. {\bf 22} (2011),  no. 4, 249-264. % Can delete either this or previous (from introduction)
\bibitem{S} A. Schrijver, Combinatorial optimization. Algorithms and Combinatorics {\bf 24}, Springer-Verlag, Berlin, 2003.
\bibitem{SVV0} A. Simis, W. Vasconcelos, and R.H. Villarreal, On the ideal theory of graphs. J. Algebra, {\bf 167} (1994), 389-416. % Can delete this!
\bibitem{SVV} A. Simis, W. Vasconcelos, and R.H. Villarreal, The integral closure of subrings associated to graphs. J. Algebra, {\bf 199} (1998), 281-289.
\bibitem{SH} I. Swanson and C. Huneke, Integral closure of ideals, rings and modules. LMS Lecture Note Series, \textbf{336}. Cambridge University Press, Cambridge, 2006.
\end{thebibliography}
\end{document}